\numberwithin{equation}{section}
\newtheorem{theorem}{Theorem}[section]
\newtheorem{definition}[theorem]{Definition}
\newtheorem{proposition}[theorem]{Proposition}
\newtheorem{corollary}[theorem]{Corollary}
\newtheorem{lemma}[theorem]{Lemma}
\newtheorem{remark}[theorem]{Remark}
\newtheorem{conjecture}[theorem]{Conjecture}
\newcommand{\volume}{{\rm vol}}
\newcommand{\supp}{{\rm supp}}
\newcommand{\ddc}{dd^c}
\newcommand{\PSH}{{\rm PSH}}
\newcommand{\capa}{\mathop{\mathrm{Cap}}\nolimits}
\newcommand{\D}{\mathbb{D}}
\newcommand{\C}{\mathbb{C}}
\newcommand{\N}{\mathbb{N}}
\newcommand{\Z}{\mathbb{Z}}
\newcommand{\R}{\mathbb{R}}
\renewcommand\P{\mathbb{P}}
\title[]{\quad Singularity of non-pluripolar cohomology classes}
\author{Duc-Bao Nguyen, Shuang Su, Duc-Viet Vu}
\newcommand{\Addresses}{
{
		\bigskip
  \footnotesize
		
  \textsc{Duc-Bao Nguyen, National University of Singapore, Department of Mathematics, 10 Lower Kent Ridge Road, 119076, Singapore.}
		\noindent
		\par\nopagebreak
		\noindent
		\textit{E-mail address}: \texttt{ducbao.nguyen@u.nus.edu}}
{\bigskip
\footnotesize

		\textsc{Shuang Su, Center for Complex Geometry, Institute for Basic Science, 55 Expo-ro, Yuseong-gu, Daejeon 34126, Republic of Korea}
		\noindent
		\par\nopagebreak
		\noindent
		\textit{E-mail address}: \texttt{su30@ibs.re.kr}

{
		\bigskip
		\footnotesize
		\textsc{Duc-Viet Vu, University of Cologne, Division of Mathematics, Department of Mathematics and Computer Science, Weyertal 86-90, 50931, K\"oln.}
		\noindent
		\par\nopagebreak
		\noindent
		\textit{E-mail address}: \texttt{dvu@uni-koeln.de}
}}
}
\date{\today}
\begin{document}

\hyphenpenalty=10000

\sloppy

\begin{abstract} We establish a relation between Lelong numbers and the full mass property of relative non-pluripolar products. We use this relation to prove that if the restricted volume of a big class $\alpha$ along an effective divisor $D$ is of full mass, then the Lelong numbers of the non-pluripolar class $\langle \alpha^{n-1}\rangle$ at every point in the support of $D$ are zero. In particular, we obtain that on projective manifolds, the Lelong numbers of the non-pluripolar class $\langle \alpha^{n-1}\rangle$ of a big class $\alpha$ are zero.
 
\end{abstract}

%\tableofcontents

\maketitle

%%%%%%%%%%%%%%%%%%%%%%%%%%%%%
%%%%%%%%%%%%%%%%%%%%%%%%%%%%%%%%%%

\noindent {\bf 2020 Mathematics Subject Classification.} 32J25, 32U15, 32Q15.

\noindent {\bf Keywords:} Lelong numbers, big cohomology class, non-pluripolar product, density currents.

\section{Introduction}

A major problem in the study of the pseudoeffective $(1,1)$-cone in a compact \break K\"ahler manifold of dimension $n$ is the Boucksom-Demailly-P\u{a}un-Peternell conjecture (\cite{BDPP}), predicting that the duality of the pseudoeffective $(1,1)$-cone is equal to the cone of non-pluripolar cohomology $(n-1,n-1)$-classes. This conjecture is known to follow from a conjectural formula for the differentiability of the volume functions on the big cone. It was verified in the projective case by Witt Nystr\"om \cite{WittNystrom-duality}, while the K\"ahler case is still open. In view of recent developments \cite{WittNystrom-deform,Vu_derivative} on this problem, one is naturally led to ask whether the restricted volume of a big class to a divisor is of ``full mass". If so, it will establish the above-mentioned conjecture for divisorial partial derivatives. This question is in turn part of a more fundamental problem of understanding possible singularity patterns of the intersection of closed positive currents.  Let us now describe the situation in more detail.

Let $X$ be a compact K\"ahler manifold of dimension $n \geq 2$. For $0 \le k \le n$, recall that  a cohomology $(k,k)$-class $\alpha \in H^{k,k}(X,\R)$ is said to be pseudoeffective if it contains a closed positive $(k,k)$-current. In this case, we write $\alpha\ge 0$. For $\alpha_1, \alpha_2 \in H^{k,k}(X,\R)$, we write $\alpha_1 \ge \alpha_2$ if $\alpha_1 - \alpha_2 \ge 0$. A class $\alpha\in H^{k,k}(X,\R)$ is called \emph{non-pluripolar} if it is represented by a non-pluripolar product $\langle T_1 \wedge \cdots \wedge T_k\rangle$, where $T_1, \ldots, T_k$ are closed positive $(1,1)$-currents. We refer to \cite{BEGZ} for basics on non-pluripolar products.  For every closed positive current $R$, we denote by $\{R\}$ the cohomology class of $R$. If $V$ is an analytic set of pure dimension, we denote by $[V]$ the current of integration along $V$ and by $\{V\}$ the cohomology class of $[V]$.

Let $\alpha \in H^{1,1}(X,\R)$ be a big cohomology class. Let $D$ be an effective real divisor.  Let $\langle \alpha^{n-1}\rangle|_{X|D}$ denote the numerical restricted volume of $\alpha$ to $D$ (see \cite[Section 2]{Collins-Tosatti-nullloci}).  By \cite[Lemma 4.5 and Proposition 4.6]{Vu_derivative}, we know that $\langle \alpha^{n-1}\rangle|_{X|D}$ is equal to the cohomology class of the relative non-pluripolar product $\langle T_{\min,\alpha}^{n-1} \ \dot{\wedge} \ [D] \rangle$, where $T_{\min,\alpha}$ is a current of minimal singularities in $\alpha$. We refer to Section~\ref{sec sing positive product} for a short recap of relative non-pluripolar products from \cite{Viet-generalized-nonpluri}. 
 By \cite{Vu_derivative}, we know that 
\begin{align}\label{eq-daohamrieng}
\frac{d}{dt}\bigg|_{t=0}\volume(\alpha+ t \{D\})=n \langle \alpha^{n-1}\rangle|_{X|D},
\end{align}
The equality (\ref{eq-daohamrieng}) was proved in  \cite[Theorem C]{WittNystrom-deform} in the case where $D$ is smooth and is not  contained in the non-K\"ahler locus of $\alpha$. 
The equality (\ref{eq-daohamrieng}) is a special case of a conjecture due to Boucksom-Demailly-P\u{a}un-Peternell \cite{BDPP} that for any $\gamma \in H^{1,1}(X,\R)$ there holds
\begin{align}\label{eq-BDPP}
\frac{d}{dt}\bigg|_{t=0}\volume(\alpha+ t \gamma)=n \langle \alpha^{n-1}\rangle \smile \gamma.
\end{align}
 The case where $\alpha, \gamma$ are in the real Neron-Severi space $\mathrm{NS}_{\R}(X)$ was proved in \cite{Boucksom-derivative-volume} and \cite{BDPP}. If $X$ is projective, then (\ref{eq-BDPP}) was established by Witt Nystr\"om \cite{WittNystrom-duality}. As mentioned above, in this case, the conjecture~\eqref{eq-BDPP} is equivalent to the orthogonal property and implies other conjectural properties of the big cone such as the weak transcendental Morse inequality and the duality of the pseudoeffective cone and the movable cone. We refer to the appendix in \cite{WittNystrom-duality} for a proof of these relations. There are numerous works around this topic, for example, see \cite{Xiao-weak-morse,Popovici2,Tosatti-weakMorse,Popovici,Xiao-movable-inter,Tosatti-orthogonality}.  We notice that in general one has 
\begin{align}\label{ine-totaldensityDTmin}
 \langle \alpha^{n-1}\rangle \smile \{D\} \ge  \langle \alpha^{n-1}\rangle|_{X|D},
\end{align}
see \cite[Section 8]{Collins-Tosatti-nullloci}. As it will be clearer later, the left-hand side of this inequality can be interpreted as the total density class of $\langle T_{\min,\alpha}^{n-1}\rangle$ and $[D]$. Hence (\ref{ine-totaldensityDTmin}) tells us that the class of the relative non-pluripolar product $\langle T_{\min,\alpha}^{n-1}\ \dot{\wedge}\ [D] \rangle$ is dominated by the total density class of $\langle T_{\min,\alpha}^{n-1}\rangle$ and $[D]$. If one can prove that (\ref{ine-totaldensityDTmin}) is indeed an equality, then one obtains the conjectural formula (\ref{eq-BDPP}) in the case where $\gamma$ is the class of a divisor. This observation motivated us to study more deeply the relation between the relative non-pluripolar products and the density currents introduced in \cite{Dinh_Sibony_density}. This problem was addressed in \cite{Viet-density-nonpluripolar}.

%Let $T_1,\ldots,T_{n-1}$ be closed positive $(1,1)$-current on $X$. Let $T$ be a closed positive $(1,1)$-current on $X$ and $R:=\langle \wedge_{j=1}^{n-1} T_j \rangle$. Recall that $R$ has no mass on pluripolar sets.  By this and  \cite{Viet-density-nonpluripolar}, we know that $\{R\} \wedge \{T\}$ is the total density class of $R$ and $T$ (hence a pseudoeffective class) and there holds  
%$$\{\langle T \ \dot{\wedge}\ R \rangle\} \le  \{T\} \wedge \{R\}=\{R\} \wedge \{T\}.$$
For a closed positive current $S$ and $x\in X$, we denote by $\nu(S,x)$ the Lelong number of $S$ at $x$. Here is the first main result in our paper.

\begin{theorem}\label{theorem main compare Lelong}
Let $T_1,\ldots,T_{n-1}$ be closed positive $(1,1)$-currents on $X$. Let $T$ be a closed positive $(1,1)$-current on $X$ and $R:=\langle T_1 \wedge \cdots \wedge T_{n-1} \rangle$. Then we have 
\begin{align}\label{ine-densityrelativeRT}
\big\{\langle \wedge_{j=1}^{n-1}T_j \ \dot{\wedge} \ T\rangle \big\} \le \{R\} \smile \{T\},
\end{align}
and moreover, if equality holds, then 
\begin{align}\label{eq-fullmasslelong0}
\nu(R,x) \cdot \nu(T,x) = 0
\end{align}
 for every $x\in X$.
\end{theorem}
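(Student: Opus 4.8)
The plan is to derive \eqref{ine-densityrelativeRT} from the theory of density currents of \cite{Dinh_Sibony_density}; note that \eqref{ine-densityrelativeRT} is the natural generalisation of the inequality \eqref{ine-totaldensityDTmin} of \cite{Collins-Tosatti-nullloci}. Concretely, form on the normal bundle to the diagonal of $X\times X$ the total density current $\mathbb{T}$ of the pair $(R,T)$, where $R$ has bidimension $(1,1)$ and $T$ has bidimension $(n-1,n-1)$, and recall that the total mass of $\mathbb{T}$ represents the cup product $\{R\}\wedge\{T\}$. On the other hand, by the comparison between density currents and relative non-pluripolar products begun in \cite{Viet-density-nonpluripolar}, the measure $\langle \wedge_{j=1}^{n-1}T_j\ \dot{\wedge}\ T\rangle$ is a positive ``shadow'' of $\mathbb{T}$ --- morally, the genuinely transverse part of the intersection, carried over the zero section --- so that its total mass is at most $\{R\}\wedge\{T\}$. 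I expect this step to be carried out either within the density-current formalism or by a Stokes-type truncation argument refining the one in \cite[Section 8]{Collins-Tosatti-nullloci}; since $\{R\}\le\{T_1\}\wedge\cdots\wedge\{T_{n-1}\}$ with possibly strict inequality, \eqref{ine-densityrelativeRT} is genuinely sharper than the crude bound $\{\langle \wedge_j T_j\ \dot{\wedge}\ T\rangle\}\le\{T_1\}\wedge\cdots\wedge\{T_{n-1}\}\wedge\{T\}$.

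For the equality statement it suffices to prove, for every $x_0\in X$, the strict pointwise improvement
\begin{align*}
\big\{\langle \wedge_{j=1}^{n-1}T_j\ \dot{\wedge}\ T\rangle\big\}+\nu(R,x_0)\,\nu(T,x_0)\ \le\ \{R\}\wedge\{T\},
\end{align*}
as equality in \eqref{ine-densityrelativeRT} then forces $\nu(R,x_0)\,\nu(T,x_0)=0$. This is a relative version of the classical fact that a closed positive current of full Monge--Amp\`ere mass in a big class has vanishing Lelong numbers everywhere, the loss of mass at a point being at least an appropriate product of Lelong numbers. To prove the displayed inequality I would work on a small ball $B$ around $x_0$: write $T=dd^c\varphi+\beta$ on $B$ with $\beta$ smooth and $\nu(\varphi,x_0)=\nu(T,x_0)$, track the contribution of $\nu(R,x_0)$ to the mass of $R$ on small balls, and combine three ingredients. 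First, a local form of \eqref{ine-densityrelativeRT}: as a measure, $\langle \wedge_j T_j\ \dot{\wedge}\ T\rangle$ is dominated by the Dinh--Sibony intersection of $R$ and $T$, whose total mass on $X$ equals $\{R\}\wedge\{T\}$. Second, Demailly's comparison theorem, in the form valid for these intersections \cite{Dinh_Sibony_density}: the Dinh--Sibony intersection of $R$ and $T$ carries an atom at $x_0$ of mass at least $\nu(R,x_0)\,\nu(T,x_0)$. Third, the relative non-pluripolar product is a non-pluripolar measure, so it puts no mass on the pluripolar set $\{x_0\}$; hence this atom is entirely absent from $\langle \wedge_j T_j\ \dot{\wedge}\ T\rangle$. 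Letting $B$ shrink and combining the three ingredients yields the displayed inequality.

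The main obstacle is the third ingredient together with its quantitative effect: one must show that the atom of mass $\nu(R,x_0)\,\nu(T,x_0)$ which the density current carries over $x_0$ is genuinely \emph{lost} by the relative non-pluripolar product --- i.e.\ that truncating the potentials of $T_1,\dots,T_{n-1}$ annihilates, in the limit, exactly the mass that $R$ and $T$ concentrate at $x_0$ --- and one must control how the cut-off indicators $\ind_{\cap_j\{u_j>-k\}}$ interact with the singularity of $T$ on a shrinking ball around $x_0$. A further subtlety is that $R$ is itself a non-pluripolar product, so its Lelong number and tangent cone at $x_0$ have to be accessed through those of $T_1,\dots,T_{n-1}$ via Demailly's inequality $\nu(R,x_0)\ge\prod_j\nu(T_j,x_0)$ (which may be strict); on the other hand one only needs $\nu(R,x_0)\,\nu(T,x_0)=0$, which is weaker than the vanishing of $\prod_j\nu(T_j,x_0)\cdot\nu(T,x_0)$, so the estimate is best phrased directly in terms of $\nu(R,x_0)$ throughout.
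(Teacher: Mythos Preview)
Your plan is correct and coincides with the paper's proof: the three ingredients you list are exactly Theorem~\ref{theorem main compare Lelongphay} (the pointwise measure inequality $\langle \wedge_j T_j\ \dot{\wedge}\ T\rangle\le S'$ for any density current $S=\pi^*S'$ of $R$ and $T$, after checking the h-dimension is minimal), Corollary~\ref{cor-sosanhsoLelongdensity} (giving $\nu(S',x)\ge\nu(R,x)\,\nu(T,x)$, hence an atom of that size in $S'$), and the observation via Lemma~\ref{lemma capT = 0 on pluripolar set} that the relative non-pluripolar product has no atoms since $T$ has no mass on points. One calibration: the genuine technical work lies in your \emph{first} ingredient, not the third---the paper obtains the pointwise comparison by a truncation argument (multiplying by $\psi_k=k^{-1}\max\{\sum u_j\circ p_1,-k\}$ and passing to tangent currents via Theorem~\ref{theorem density for bounded potential mix version}), whereas the no-atom property is immediate; no shrinking-ball localisation is needed. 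Also, your aside that $\nu(R,x_0)\ge\prod_j\nu(T_j,x_0)$ is Demailly's inequality for the \emph{classical} product and generally fails for the non-pluripolar product $R$, but as you note this is irrelevant since the argument uses $\nu(R,x_0)$ directly.
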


We will see later that $ \{R\} \smile \{T\}$ is the total density class of $R$ and $T$ (hence a pseudoeffective class), and (\ref{ine-densityrelativeRT}) follows from a comparison between density currents and relative non-pluripolar products. This comparison is somewhat similar to \cite[Theorem 3.5]{Viet-density-nonpluripolar} saying that 
$$\{\langle T \ \dot{\wedge}\ R \rangle\} \le  \{T\} \smile \{R\}=\{R\} \smile \{T\}.$$
The difference is the left-hand side of this inequality where the role of $T,R$ is somehow reversed.  We also want to point out  a subtlety that in (\ref{ine-densityrelativeRT}), we have to use density currents of $\langle \wedge_{j=1}^{n-1} T_j \rangle$ and $T$ (not those of $T_1,\ldots, T_{n-1},T$). This interpretation in terms of density currents shows us that having the equality in (\ref{ine-densityrelativeRT}) is equivalent to saying that the relative non-pluripolar product $\langle \wedge_{j=1}^{n-1} T_j \ \dot{\wedge} \ T\rangle$  is of ``full mass''. In this sense, the assertion (\ref{eq-fullmasslelong0}) reflects again the role of Lelong numbers as an obstacle to the full mass property. Such a phenomenon has already appeared in several previous works, see \cite{GZ-weighted,Lu-Darvas-DiNezza-singularitytype,Vu_lelong-bigclass}. Our approach is nevertheless different and, in a similar vein to \cite{SuVu-volume-lelong}. We underline here that Theorem \ref{theorem main compare Lelong} looks similar to \cite[Theorem 3.6]{SuVu-volume-lelong}, but the latter result only directly gives  Theorem \ref{theorem main compare Lelong} for $n=2$.

Let $\alpha \in H^{1,1}(X,\R)$ be a big class and $T_{\min,\alpha}$ be a current of minimal singularities in $\alpha$. For $1 \le k \le n$, we define $\nu(\langle \alpha^k \rangle, x)$ to be the Lelong number of  $\langle T_{\min,\alpha}^k \rangle$ at $x$. This definition is independent of the choice of $T_{\min,\alpha}$ (see Theorem~\ref{theorem Lelong of relative non-pluripolar product}). The case $k=1$ was discussed in \cite{Boucksom_anal-ENS}. The following is our second main result showing that in the case where $X$ is projective, the class $\langle \alpha^{n-1} \rangle$ bears a characteristic of a ``nef" class of higher bi-degree.

\begin{theorem}\label{cor-divisorpartialderivitve}
    Let $X$ be a projective manifold of dimension $n$. Let $\alpha \in H^{1,1}(X,\R)$ be a big class. Then, we have
    $\nu(\langle \alpha^{n-1}\rangle, x)=0$ for every $x \in X$.
\end{theorem}

This result follows from Corollary~\ref{cor-divisorpartialderivitve2}. We emphasize here a special case when $X$ is a projective surface. In this case,
Theorem~\ref{cor-divisorpartialderivitve} implies that $\langle \alpha \rangle$ is a nef class, and the equality $\alpha= \langle \alpha \rangle + \beta$ is a decomposition of $\alpha$ into the sum of a nef class and a cohomology class of an effective $\R$-divisor. It was proved in \cite{Boucksom_anal-ENS} that this is indeed the Zariski decomposition of $\alpha$ when $\alpha$ is integral. In this sense, the statement that $\nu(\langle \alpha^{n-1} \rangle,x)=0$ for $x\in X$ is related to Zariski's decompositions. Moreover, one can ask if Theorem \ref{cor-divisorpartialderivitve} remains to be true if $\langle \alpha^{n-1}\rangle$ is replaced by some $\langle \alpha^k \rangle$ for $1\le k < n-1$. This is, however, not the case in general.  By \cite[Section 5]{XiaojunWu-nefnessHigherDim} (see also \cite{Cutkosky-ZariskiDecomp,Boucksom_anal-ENS}), there is a projective three-fold $X$ and a big class $\alpha \in H^{1,1}(X,\R)$ such that $\langle \alpha \rangle$ is not nef. For such a class $\alpha$, there exists some point $x\in X$ such that $\nu(\langle \alpha\rangle,x) > 0$.

Finally, we would like to propose the following conjecture, which is a weaker version of Conjecture~\eqref{eq-BDPP} (see Theorem \ref{the-main-Zariski}).

\begin{conjecture} \label{conj-lelong} Let $X$ be a compact K\"ahler n-dimensional manifold. Let $\alpha \in H^{1,1}(X,\R)$ be a big class. Then the Lelong number of $\langle \alpha^{n-1} \rangle$ at every point in $X$ is zero. 
\end{conjecture}

\vspace{1cm}

\noindent \textbf{Acknowledgments.} The research of Duc-Bao Nguyen is supported by the Singapore International Graduate Award (SINGA) and the NUS Overseas Research Immersion Award (ORIA). He would like to thank his advisor, Tien-Cuong Dinh, for many useful discussions and for his constant support during the preparation of this work. Part of this work was carried out during Duc-Bao Nguyen's visit at the Department of Mathematics and Computer Science, University of Cologne. He would like to thank them for their warm welcome and financial support. The research of Shuang Su and Duc-Viet Vu is partially funded by the Deutsche Forschungsgemeinschaft (DFG, German Research Foundation) Projektnummer 500055552.

\section{Preliminaries on density currents}\label{sec preliminaries}

In this section, we recall basic facts on the theory of density currents introduced by Dinh-Sibony in \cite{Dinh_Sibony_density}. We refer to  \cite[Section 2]{Vu_density-nonkahler} for more details and simplified presentations of some results in \cite{Dinh_Sibony_density}.

Let $X$ be a compact K\"ahler manifold of dimension $n$. Let $V$ be a submanifold of dimension $l$. Let $T$ be a closed positive $(p,p)$-current on $X$ with $0\leq p \leq n$. We denote by $\pi: E \to V$ the normal bundle of $V$ in $X$ and $\overline{E}:= \P (E\oplus \C)$ the compactification of $E$. We also use $\pi$ to denote the projection of $E$ to $V$.
 For $\lambda \in \C^*$, we denote $A_\lambda : E\to E$ the multiplication by $\lambda$ on fibers of $E$.

Let $U$ be a local chart of $X$ such that $U\cap V \neq \varnothing$. Let $\tau$ be a smooth diffeomorphism from $U$ to an open neighborhood of $V\cap U$ in $E$ which is identity on $V\cap U$ such that the restriction of its differential $d\tau$ to $E|_{V\cap U}$ is identity. Such a map is called \textit{an admissible map}. When $U$ is a small enough local chart, we can choose a \textit{holomorphic} admissible map by using suitable holomorphic coordinates on $U$. By \cite[Theorem 4.6]{Dinh_Sibony_density}, the family of currents $(A_\lambda)_*\tau_*( T)$ has uniformly bounded mass in $\lambda$ on compact subsets in $E|_{U\cap V}$, and every limit current is independent of the choice of the admissible map $\tau$. Thus, we can obtain global limit currents as $\lambda \to \infty$. Such a limit, which is a closed positive current on $E$, is called \emph{a tangent current to $T$} along $V$.  Every tangent current to $T$ along $V$  can be trivially extended to  $\overline{E}$ (see \cite[Theorem 4.6]{Dinh_Sibony_density}) and is, in general, not unique. Its cohomology class in $H^{2p}(\overline{E},\R)$ is unique and is called \textit{total tangent class} of $T$ along $V$. We denote it by $\kappa^{V}(T)$. We note that in \cite{Dinh_Sibony_density}, the authors used global admissible maps to define tangent currents directly. The definition there is equivalent to the one we presented here by using local holomorphic maps $\tau$ as above.

\begin{definition}
Let $p_1, \ldots,p_m \in \N$ with $p:=p_1+\cdots+p_m \le n$. Let $T_j$ be closed positive $(p_j,p_j)$-currents for $1\leq j\leq m$. Denote by $\mathbb{T}$ the tensor product $T_1 \otimes \cdots \otimes T_m$ which is a closed positive $(p,p)$-current on $X^m$. A density current associated to $T_1,\ldots,T_m$ is a tangent current to $\mathbb{T}$ along the diagonal $\Delta_m:=\{(x,\ldots,x) : x\in X\} \subset X^m$ of $X^m$.   
\end{definition}

Let $\mathbb{E}_m$ be the normal bundle of $\Delta_m$ in $X^m$. Let  $\pi_m: \mathbb{E}_m \to \Delta_m \simeq X$ be the canonical projection. We recall that the density currents associated to $T_1,\ldots,T_m$ are closed positive $(p,p)$-currents on $\mathbb{E}_m$ and can be extended to currents on $\overline{\mathbb{E}}_m$. The cohomology class $\kappa(T_1,\ldots,T_m)$ of density currents of $T_1,\ldots,T_m$ is unique and called \emph{the total density class} of $T_1,\ldots, T_m$.

%We recall the following notion from \cite{Dinh_Sibony_density}.

\begin{definition}(\cite[Definition 3.1]{Dinh_Sibony_density})
    Let $\pi_V : F \to V$ be a holomorphic submersion between complex manifolds $F$ and $V$. Let $S$ be a positive $(p,p)$-current on $F$. The horizontal dimension (or h-dimension for short) of $S$ with respect to $\pi$ is the largest number $q$ such that $S \wedge \pi_V^* (\omega_V^{q}) \neq 0$ for some Hermitian metric $\omega_V$ on $V$.
\end{definition}

It was shown in \cite[Theorem 4.6]{Dinh_Sibony_density} that the density currents of $T_1,\ldots,T_m$ have the same h-dimension which is called the \textit{density h-dimension} of $T_1,\ldots,T_m$.

%We have the following structural proposition of closed positive currents with minimal h-dimension.

%\begin{lemma}\label{lemma minimal h dim} (\cite[Lemma 3.4]{Dinh_Sibony_density})
%Suppose that $S$ is closed and the h-dimension of $S$ is $l-p$ where $l = \dim F$. Then there exists a closed positive current $S'$ on $V$ such that $S = \pi^* S'$.\end{lemma}

%Consequently, if the density h-dimension of $T_1,\ldots,T_m$ is minimal (i.e, equal to $n-p_1-\cdots-p_m$), then the total density class of $T_1,\ldots, T_m$ is equal to $\pi_m^* \alpha$. It was also observed in \cite{Dinh_Sibony_density} that in this case $\alpha= \{T_1\} \wedge \cdots \wedge \{T_m\}$. 

%When the density h-dimension is minimal, one can have the following version of B\'ezout's theorem.

The following result from \cite{Dinh_Sibony_density} is important for us. We refer the reader to \cite{DinhSibony_pullback} for the discussion on the pull-back of closed positive currents by holomorphic maps.

\begin{lemma}\label{lemma full cohomology minimal dim}
    Suppose that the density h-dimension of $T_1,\ldots,T_m$ is minimal, i.e. it is equal to $n-p_1-\cdots -p_m$. Let $S$ be a density current associated to $T_1,\ldots,T_m$. Then there is a closed positive current $S'$ on $X$ such that $S = \pi_m^*(S')$, and we have
     $$\{S'\} = \{T_1\} \smile  \cdots \smile \{T_m\}.$$
\end{lemma}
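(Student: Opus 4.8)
The plan is to reduce the two assertions to the structure theory of density currents developed in \cite{Dinh_Sibony_density}. Put $p:=p_1+\dots+p_m$; we may assume $m\ge2$, $p\ge1$ and $S\ne0$ (otherwise take $S'=0$). Since $\mathbb{E}_m$ has rank $(m-1)n$, the projection $\pi_m\colon\overline{\mathbb{E}}_m=\P(\mathbb{E}_m\oplus\C)\to X$ is a $\P^{(m-1)n}$-bundle; writing $h:=c_1(\mathcal{O}_{\overline{\mathbb{E}}_m}(1))$, the group $H^*(\overline{\mathbb{E}}_m,\R)$ is a free $H^*(X,\R)$-module on $1,h,\dots,h^{(m-1)n}$ (Leray--Hirsch), the divisor at infinity $X_\infty:=\overline{\mathbb{E}}_m\setminus\mathbb{E}_m=\P(\mathbb{E}_m\oplus 0)$ is an analytic hypersurface, and the zero section $\iota_0\colon X\simeq\P(0\oplus\C)\hookrightarrow\overline{\mathbb{E}}_m$ satisfies $\pi_m\circ\iota_0=\id_X$. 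From \cite[Theorem 4.6]{Dinh_Sibony_density} and the surrounding results in \cite{Dinh_Sibony_density} I will use: $S$ extends to a closed positive $(p,p)$-current on the compact Kähler manifold $\overline{\mathbb{E}}_m$ which puts no mass on $X_\infty$ (the trivial extension) and is conic, i.e.\ invariant under the fibrewise scalings $A_\lambda$ of $\mathbb{E}_m$; its class $\{S\}$ is the total density class $\kappa:=\kappa(T_1,\dots,T_m)$, which is the cohomological specialization to the normal cone of $\Delta_m$ of the class $\{\mathbb{T}\}=\pr_1^*\{T_1\}\wedge\dots\wedge\pr_m^*\{T_m\}$ of $\mathbb{T}:=T_1\otimes\dots\otimes T_m$ on $X^m$; and the h-dimension of $S$ equals the density h-dimension, which by hypothesis is the minimal value $n-p$.

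First I would read the minimality of the h-dimension off the Leray--Hirsch expansion $\kappa=\sum_{j=0}^{p}\pi_m^*\beta_j\wedge h^j$, $\beta_j\in H^{2(p-j)}(X,\R)$. Pairing with $\pi_m^*\{\omega_X\}^n$ kills all terms except $j=p$ (for $j<p$ the factor $\beta_j\wedge\{\omega_X\}^n$ lies in $H^{>2n}(X,\R)=0$), so $\kappa\wedge\pi_m^*\{\omega_X\}^n=\beta_p\,\pi_m^*\{\omega_X\}^n\wedge h^p$ with $\beta_p\in H^0(X,\R)\cong\R$; since $\pi_m^*\{\omega_X\}^n\wedge h^p\ne0$ in $H^*(\overline{\mathbb{E}}_m,\R)$ (because $p\le(m-1)n$) and $S\wedge\pi_m^*\omega_X^n$ is a closed positive current on the compact Kähler manifold $\overline{\mathbb{E}}_m$, we must have $\beta_p=0$, for otherwise $S\wedge\pi_m^*\omega_X^n\ne0$ and the h-dimension would be $n>n-p$. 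Restricting $\kappa$ to a generic fibre $F_x:=\pi_m^{-1}(x)$ via $\iota_x$ now annihilates every term: those with $j<p$ because a positive-degree class restricts to $0$ on a point, and the term $j=p$ because $\beta_p=0$. Hence $\iota_x^*\kappa=0$; since $\iota_x^*\{S\}$ is the cohomology class of the $\pi_m$-slice $\langle S,\pi_m,x\rangle$, a closed positive current on the projective space $F_x$, this slice vanishes for almost every $x$.

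The step I expect to be the real obstacle is to promote this to the current identity $S=\pi_m^*S'$. On the dense open vector bundle $\mathbb{E}_m=\overline{\mathbb{E}}_m\setminus X_\infty$, the restriction $S|_{\mathbb{E}_m}$ is a conic closed positive current whose generic slices along $\pi:=\pi_m|_{\mathbb{E}_m}$ vanish, and the structure theory of conic closed positive currents on a vector bundle (\cite{Dinh_Sibony_density}) shows that such a current must be the pullback $\pi^*S'$ of a closed positive $(p,p)$-current $S'$ on $X$ --- roughly, conicity forces the fibrewise slices to be cones over $\P^{(m-1)n-1}$, which the vanishing of slices then kills, leaving only a horizontal part; positivity and closedness of $S'$ are automatic because $\pi$ is a submersion. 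This is where the full density-current machinery is needed, and I would invoke \cite{Dinh_Sibony_density} for it rather than reprove it. Once $S|_{\mathbb{E}_m}=\pi^*S'$ is known, observe that $\pi_m^*S'$ also puts no mass on $X_\infty$ (again because $\pi_m$ is a submersion), so $S$ and $\pi_m^*S'$ are both the trivial extension across $X_\infty$ of $S|_{\mathbb{E}_m}$; by uniqueness of the trivial extension they coincide, i.e.\ $S=\pi_m^*S'$.

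Finally I would compute $\{S'\}$. From $\{S\}=\pi_m^*\{S'\}$ and $\pi_m\circ\iota_0=\id_X$ we get $\{S'\}=\iota_0^*\{S\}=\iota_0^*\kappa$. By the description of $\kappa$ as the specialization of $\{\mathbb{T}\}$ to the normal cone of $\Delta_m$, its restriction to the zero section $X_0=\Delta_m\simeq X$ is the restriction of $\{\mathbb{T}\}=\pr_1^*\{T_1\}\wedge\dots\wedge\pr_m^*\{T_m\}$ to the diagonal, which is $\{T_1\}\wedge\dots\wedge\{T_m\}$ since each $\pr_i$ restricts to $\id_X$ along $\Delta_m$. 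Hence $\{S'\}=\{T_1\}\wedge\dots\wedge\{T_m\}$, as required. Apart from the conic-structure input in the third paragraph, everything is Leray--Hirsch bookkeeping, uniqueness of the trivial extension, and functoriality of pullback on cohomology, so that is the one place where real difficulty could hide.
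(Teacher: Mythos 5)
The paper does not prove this lemma; it is quoted directly from \cite{Dinh_Sibony_density} (``The following result from \cite{Dinh_Sibony_density} is important for us''), so there is no in-paper proof to compare against. Your reconstruction is nonetheless in line with what one expects from Dinh--Sibony: read off the Leray--Hirsch decomposition of the total density class $\kappa=\sum_j \pi_m^*\beta_j\wedge h^j$, use the conic structure theory for the first assertion, and pull back along the zero section for the second. Two remarks on accuracy.

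First, your slicing heuristic does not do what you want. You derive $\beta_p=0$ from $S\wedge\pi_m^*\omega_X^n=0$, which already follows from the h-dimension being \emph{less than} $n$, not from it being minimal. The consequent vanishing $\iota_x^*\kappa=0$ and hence the vanishing of a.e.\ fibrewise slice $\langle S,\pi_m,x\rangle$ is therefore a trivial consequence of $S\wedge\pi_m^*\omega_X^n=0$ (the coarea formula makes the slice disintegrate $S\wedge\pi_m^*(\cdot)$ against top forms on $X$, which is identically zero), and carries no information beyond sub-maximal h-dimension. In particular ``conic $+$ generic slices vanish'' is \emph{not} a correct hypothesis for ``pullback''; what the Dinh--Sibony structure theorem actually uses is conicity together with the h-dimension being \emph{exactly} $n-p$, i.e.\ minimal. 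You do ultimately invoke \cite{Dinh_Sibony_density} for the step $S=\pi_m^*S'$, which is the right move, but the heuristic you offer as justification is misleading and should be replaced by a direct citation of their structure result for conic positive closed currents of minimal h-dimension.

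Second, the identification $\iota_0^*\kappa=\{T_1\}\wedge\cdots\wedge\{T_m\}$ is correct but deserves a precise reference rather than the vague appeal to ``specialization to the normal cone.'' The clean route is via the paper's Proposition~\ref{lemma siu descriptuon for tangent currents} (Dinh--Sibony, Prop.~4.12): writing $\kappa=\sum_j\pi_m^*\beta_j\wedge h^j$, that proposition gives $\kappa|_{\P(\mathbb{E}_m)}=e_{\Delta_m}(\mathbb{T})\wedge h_{\P(\mathbb{E}_m)}+\pi^*(\{\mathbb{T}\}|_{\Delta_m})$, so comparing the $h^0$-coefficients yields $\beta_0=\{\mathbb{T}\}|_{\Delta_m}=\{T_1\}\wedge\cdots\wedge\{T_m\}$; since $\iota_0^*h=0$ (the tautological bundle restricted to $\P(0\oplus\C)$ is trivial), $\iota_0^*\kappa=\beta_0$ and your conclusion follows. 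With these two repairs the argument is sound and matches the Dinh--Sibony proof in spirit.
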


This motivates the following definition (see \cite{Viet_Lucas,VietTuanLucas}).

\begin{definition}  \label{def-DSproduct} We say that the \emph{Dinh-Sibony product} $T_1 \curlywedge \cdots \curlywedge T_m$ of $T_1, \ldots, T_m$ is well-defined  if there is only one density current associated to $T_1, \ldots, T_m$ and this current is  the pull-back by $\pi_m$ of a current $S$ on $\Delta_m$. We define $T_1 \curlywedge \cdots \curlywedge T_m := S$. This can be viewed as a current on $X$ as we identify $\Delta_m$ with $X$.
\end{definition}

The notion of Dinh-Sibony products generalizes well-known notions of intersections of currents. We refer to \cite{Dinh_Sibony_density,DNV,Viet_Lucas,VietTuanLucas} for details. We only cite here the following particular case of main results in \cite{VietTuanLucas} (we note that the notion of density currents can be defined for non-compact manifolds as in \cite{VietTuanLucas}). % (see Remark \ref{re-noncompact} below for a definition of density currents in a general complex manifolds).

\begin{theorem}\label{th-density11agreewithclassical} Let $X$ be a complex manifold. Let $T_1, \ldots, T_m$ be closed positive currents of bi-degree $(1,1)$ on $X$ with locally bounded potentials, and $T$ be a closed positive current of bi-degree $(p,p)$ on $X$. Assume that $m+p\leq n$. Then the Dinh-Sibony product of $T_1, \ldots, T_m,T$ is well-defined and equal to the classical intersection $T_1 \wedge \cdots \wedge T_m \wedge T$. 
\end{theorem}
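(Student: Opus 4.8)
The plan is to reduce the claim to a local statement near the diagonal and then exploit the fact that, for bidegree $(1,1)$ currents with locally bounded potentials, the classical Bedford--Taylor product is well-behaved under the dilations defining tangent currents. First I would pass to the tensor current $\mathbb{T}=T_1\otimes\cdots\otimes T_m\otimes T$ on $X^{m+1}$ and study its tangent currents along the diagonal $\Delta_{m+1}$. Since the statement is local on $X$ (tangent currents are obtained chart by chart, and the resulting limit is independent of the chart by \cite[Theorem 4.6]{Dinh_Sibony_density}), I may work in a coordinate ball $U\subset\C^n$, identify $\Delta_{m+1}\cap U^{m+1}$ with $U$, and trivialize the normal bundle $\mathbb{E}_{m+1}$ over $U$ as $U\times\C^{mn}$; concretely, use coordinates $(x,v_1,\ldots,v_m)$ on $\mathbb{E}_{m+1}$ where $x$ is the base point and $v_j$ records the difference of the $j$-th copy from the base copy, and let $A_\lambda$ act by $v_j\mapsto\lambda v_j$.

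The heart of the argument is to show that every density current $S$ associated to $T_1,\ldots,T_m,T$ is the pullback $\pi_{m+1}^*(S')$ with $S'=T_1\wedge\cdots\wedge T_m\wedge T$ (classical intersection), and that this current is the unique density current. For this I would write, in the chosen chart, $T_j=dd^c u_j$ with $u_j$ locally bounded plurisubharmonic, and track how the potentials transform under $A_\lambda$: the pulled-back current $(A_\lambda)_*\mathbb{T}$ near the diagonal corresponds to replacing the potential of the $j$-th factor by $u_j$ evaluated at a point displaced by $\lambda^{-1}v_j$ from the base point $x$ (up to the standard rescaling of the dilation acting on currents). As $\lambda\to\infty$ these displacements go to zero; since the $u_j$ are locally bounded, the Bedford--Taylor products are continuous under decreasing (and, more relevantly, under the $L^1_{loc}$-with-uniformly-bounded) limits of the potentials in the sense of \cite{BT}, and one checks that the limit of $(A_\lambda)_*\mathbb{T}$ is exactly the product $u_1,\ldots,u_m$ evaluated at $x$ wedged against the limit of the $T$-factor, which is pulled back from the base. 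Concretely, the only surviving contributions are those that are constant along the fibre directions $v_1,\ldots,v_m$ except for the $T$-part; this forces the density current to have minimal h-dimension, namely $n-m$ if $T$ has bidegree $(p,p)$ then $n-m-p$, so that Lemma \ref{lemma full cohomology minimal dim} applies and gives $S=\pi_{m+1}^*(S')$ with $\{S'\}=\{T_1\}\wedge\cdots\wedge\{T_m\}\wedge\{T\}$. Identifying $S'$ with $T_1\wedge\cdots\wedge T_m\wedge T$ then follows by evaluating $S'$ against test forms supported in a chart and using the explicit limit computed above.

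The main obstacle I anticipate is making rigorous the interchange of the limit $\lambda\to\infty$ with the Bedford--Taylor wedge product. The Bedford--Taylor product is not continuous under arbitrary weak limits of bounded potentials; one needs the quantitative form of continuity (e.g. along monotone sequences, or the Xing-type convergence estimates) and must verify that the family $u_j(x+\lambda^{-1}v_j)$ converges to $u_j(x)$ in the appropriate topology uniformly on compacts, together with uniform mass bounds for the currents $(A_\lambda)_*\mathbb{T}$ from \cite[Theorem 4.6]{Dinh_Sibony_density}. A secondary technical point is to handle the general closed positive current $T$ (of arbitrary bidegree, possibly very singular): here no continuity is needed for the $T$-factor because $T$ is simply transported by the dilations in the base directions and its potential-type behaviour is irrelevant — only the locally bounded factors $u_1,\ldots,u_m$ are being multiplied together and with $T$, and the key input is precisely Theorem \ref{th-density11agreewithclassical}'s hypothesis that they have locally bounded potentials. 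Once the limit is identified on a chart and shown to be independent of all choices, uniqueness of the density current and the identification with the classical product are immediate, and the Dinh-Sibony product is well-defined by Definition \ref{def-DSproduct}.
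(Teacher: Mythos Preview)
The paper does not give its own proof of this theorem: it is quoted verbatim as ``a particular case of main results in \cite{VietTuanLucas}''. So there is no proof in the paper to compare against; I will assess your proposal on its own terms and against the closely related argument the paper \emph{does} give, namely the proof of Theorem~\ref{theorem density for bounded potential mix version}.

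Your reduction is correct. In a chart with the admissible map $\tau(y_1,\ldots,y_{m+1})=(y_{m+1},y_1-y_{m+1},\ldots,y_m-y_{m+1})=(x,v_1,\ldots,v_m)$, one has
\[
(A_\lambda)_*\tau_*\mathbb{T}\;=\; dd^c f_{1,\lambda}\wedge\cdots\wedge dd^c f_{m,\lambda}\wedge \pi^*T,
\qquad f_{j,\lambda}(x,v)=u_j(x+\lambda^{-1}v_j),
\]
with each $f_{j,\lambda}$ uniformly bounded psh and $f_{j,\lambda}\to u_j(x)$ in $L^1_{\mathrm{loc}}$. You then correctly isolate the only real difficulty: passing this $L^1_{\mathrm{loc}}$ convergence through an $m$-fold Bedford--Taylor product wedged with $\pi^*T$.

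This is a genuine gap in your proposal. Uniform boundedness of the potentials together with $L^1_{\mathrm{loc}}$ convergence does \emph{not} imply convergence of Monge--Amp\`ere products; the Xing-type theorems you invoke need convergence in capacity, and here the relevant capacity is $\capa_{\pi^*T}$, not the ambient one. You have not argued that $f_{j,\lambda}\to u_j\circ\pi$ in $\capa_{\pi^*T}$, and doing so simultaneously for all $j$ (so that the $m$-fold product converges) is exactly the subtle point. The sequence $f_{j,\lambda}$ is neither monotone nor does it obviously satisfy a uniform capacity modulus of continuity in the translation parameter.

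The standard way around this, used both in \cite{VietTuanLucas} and in the paper's own proof of the variant Theorem~\ref{theorem density for bounded potential mix version}, is to argue by induction on $m$ rather than attack all factors at once. Assume the result for $m-1$ factors, set $R=T_2\wedge\cdots\wedge T_m\wedge T$, and study the (non-closed) positive current $u_1 R$. The inductive hypothesis identifies the tangent current of $(T_2\wedge\cdots\wedge T_m)\otimes T$, hence of $R$; one then shows that the tangent current of $u_1\cdot(T_2\wedge\cdots\wedge T_m)\otimes T$ is $\pi^*(u_1 R)$ by a quasi-continuity argument in the spirit of Lemma~\ref{lemma strong quasi-continuity} (this handles one bounded potential at a time, which is the point). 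Finally, apply $dd^c$ in the first group of variables and use that $dd^c$ commutes with weak limits. This replaces your simultaneous $m$-variable capacity-convergence problem by a single quasi-continuity step at each stage of the induction, which is what makes the argument go through.
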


We will need the following crucial result generalizing the classical comparison of Lelong numbers by Demailly \cite[Corollary 7.9 of Chapter 3]{Demailly_ag}.

\begin{theorem}\label{the-sosanhsoLelongdensity} (\cite[Corollary 1.3]{SuVu-volume-lelong})
For every $x \in X$ and for every density current $S$ associated to $T_1, \ldots, T_{m}$, we have 
        \begin{align*}
            \nu(S, x^{m}) \ge \nu(T_1, x) \cdots \nu(T_m,x),
        \end{align*}
        where $x^{m}=(x, \dots,x) \in \Delta_{m} \subset \mathbb{E}_m$.
\end{theorem}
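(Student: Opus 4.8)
The plan is to reduce the statement to the germ at $x^m$ and to realize the density current $S$ as a weak limit of \emph{biholomorphic images} of the tensor product $\mathbb{T}:=T_1\otimes\cdots\otimes T_m$. Two elementary properties of Lelong numbers of closed positive currents then do the work: invariance under local biholomorphisms, and upper semicontinuity with respect to weak convergence.

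First I would fix $x\in X$ and pass to a local model near $x^m$ as in the construction of tangent currents recalled above: a neighborhood $U$ of $x^m$ in $X^m$ is identified with a neighborhood of the zero section of $\mathbb{E}_m$ in such a way that each $A_\lambda$ is multiplication by $\lambda$ on fibers; in particular the zero section, and hence the point $x^m$, is fixed by every $A_\lambda$. Since $S$ is a density current associated to $T_1,\dots,T_m$, there is a sequence $\lambda_k\to\infty$ with $(A_{\lambda_k})_*\mathbb{T}\to S$ weakly. Fix a small neighborhood $W$ of $x^m$; for all large $k$ the set $A_{\lambda_k}^{-1}(W)$ is contained in $U$ (it contracts towards the zero section), so $(A_{\lambda_k})_*\mathbb{T}$ is a well-defined closed positive current on $W$, equal to the image of $\mathbb{T}|_{A_{\lambda_k}^{-1}(W)}$ under the biholomorphism $A_{\lambda_k}\colon A_{\lambda_k}^{-1}(W)\to W$. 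Because the Lelong number at a point depends only on the germ of the current there and is a biholomorphic invariant, and because $A_{\lambda_k}$ fixes $x^m$, this yields
\[
\nu\bigl((A_{\lambda_k})_*\mathbb{T},\,x^m\bigr)=\nu\bigl(\mathbb{T},\,x^m\bigr)\qquad\text{for all }k.
\]

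Next I would identify $\nu(\mathbb{T},x^m)$. As Lelong numbers are intrinsic, it equals $\nu(T_1\otimes\cdots\otimes T_m,x^m)$ computed in $X^m$, which by the standard multiplicativity of Lelong numbers under external products of closed positive currents equals $\nu(T_1,x)\cdots\nu(T_m,x)$. It then remains to pass to the limit $k\to\infty$, and for this I would invoke the upper semicontinuity of Lelong numbers at a fixed point under weak convergence: if $R_k\to R$ weakly, then $\nu(R,x^m)\ge\limsup_k\nu(R_k,x^m)$. This is classical, following from the monotonicity in $r$ of the Lelong quotients $r\mapsto r^{-2(mn-p)}\sigma_{R_k}\bigl(B(x^m,r)\bigr)$ (Lelong's theorem; here $p=p_1+\cdots+p_m$ and $\mathbb{E}_m$ has complex dimension $mn$) together with the convergence $\sigma_{R_k}\bigl(B(x^m,r)\bigr)\to\sigma_R\bigl(B(x^m,r)\bigr)$ for all but countably many $r$, which itself follows from the weak convergence $\sigma_{R_k}\to\sigma_R$ of trace measures. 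Applying this with $R_k=(A_{\lambda_k})_*\mathbb{T}$ and $R=S$ gives
\[
\nu(S,x^m)\ \ge\ \limsup_{k\to\infty}\nu\bigl((A_{\lambda_k})_*\mathbb{T},x^m\bigr)\ =\ \nu(T_1,x)\cdots\nu(T_m,x),
\]
which is the asserted inequality. (Since $x^m$ lies in the affine part $\mathbb{E}_m\subset\overline{\mathbb{E}}_m$, it is irrelevant whether the Lelong number of $S$ is computed on $\mathbb{E}_m$ or on $\overline{\mathbb{E}}_m$.)

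The step I expect to need the most care is not analytically deep but is geometric bookkeeping: arranging the local model so that, along $\lambda_k\to\infty$, the shrinking sets $A_{\lambda_k}^{-1}(W)$ stay inside the chart $U$ while still containing $x^m$, so that the weak limit defining $S$ genuinely records the germ of $\mathbb{T}$ at $x^m$; and verifying that the two classical inputs — multiplicativity of Lelong numbers under external products, and their upper semicontinuity under weak limits — apply verbatim in this setting. With these in hand, the inequality is an immediate assembly, and the argument runs parallel to Demailly's proof of the classical comparison theorem that it generalizes.
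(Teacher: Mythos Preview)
Your argument is correct and gives a clean, self-contained proof. The paper does not prove the statement but only records that it follows from the comparison of total density classes in \cite[Proposition~4.13]{Dinh_Sibony_density}: one compares the total tangent class of $\mathbb T$ along $\Delta_m$ with that of $\mathbb T$ along the point $x^m$, and extracts the Lelong-number inequality cohomologically. Your route is genuinely different and more elementary: instead of passing through cohomology on the blow-up, you stay entirely at the level of germs, using that each $A_\lambda$ is a biholomorphism fixing the point $x^m$ on the zero section, that Lelong numbers are biholomorphic invariants, that $\nu(T_1\otimes\cdots\otimes T_m,(x,\dots,x))=\prod_j\nu(T_j,x)$, and that $R\mapsto\nu(R,x^m)$ is upper semicontinuous under weak limits. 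This is exactly the mechanism behind Demailly's classical inequality $\nu(T_1\wedge\cdots\wedge T_m,x)\ge\prod_j\nu(T_j,x)$, transplanted to the tangent-current setting. What your approach buys is transparency and independence from the Dinh--Sibony cohomological machinery; what the paper's approach buys is that it fits into a broader comparison of tangent classes along nested submanifolds, of which the Lelong-number statement is a special case.

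One small remark: the multiplicativity $\nu(\mathbb T,x^m)=\prod_j\nu(T_j,x)$, while true, is slightly less ``standard'' than the other two ingredients. A quick justification is that the trace measure satisfies $\sigma_{T_1\otimes\cdots\otimes T_m}=\sigma_{T_1}\otimes\cdots\otimes\sigma_{T_m}$ (only one term survives in the multinomial expansion of the flat K\"ahler form for bi-degree reasons), and then one either computes directly or passes to tangent cones, for which the trace-measure quotient is exactly $r$-independent and the product formula is an elementary Beta-integral identity. You only need the inequality $\ge$ anyway, so even the cruder polydisk-versus-ball comparison would suffice.
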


This result follows from a more general comparison of total density classes established in \cite[Proposition 4.13]{Dinh_Sibony_density}. Recall that the Lelong numbers are preserved under submersions (\cite[Proposition 2.3]{Meo-auto-inter}). Thus, Theorem \ref{the-sosanhsoLelongdensity} implies the following estimate. 

\begin{corollary}\label{cor-sosanhsoLelongdensity} (\cite[Corollary 1.3]{SuVu-volume-lelong})
Assume that the density h-dimension of $T_1, \ldots, T_m$ is minimal. Let $S$ be a density current of $T_1,\ldots,T_m$ and $S'$ be a closed positive current on $X$ such that $\pi_m^*(S')=S$. Then for every $x \in X$, we have 
        \begin{align*}
            \nu(S', x) \ge \nu(T_1, x) \cdots \nu(T_m,x).
        \end{align*}
%        $x^{m}=(x, \dots,x) \in \Delta_{m} \subset E_m$.
\end{corollary}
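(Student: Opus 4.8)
The plan is to obtain Corollary~\ref{cor-sosanhsoLelongdensity} as a direct formal consequence of Theorem~\ref{the-sosanhsoLelongdensity} combined with the invariance of Lelong numbers under holomorphic submersions. The only new input needed beyond what is already stated is that pulling a current back along a submersion does not change its Lelong numbers, which is \cite[Proposition 2.3]{Meo-auto-inter}.

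Concretely, I would proceed as follows. First, observe that the projection $\pi_m\colon \mathbb{E}_m \to \Delta_m \simeq X$ of the normal bundle onto its base is a holomorphic submersion, and that for a point $x \in X$ the corresponding point $x^m = (x,\dots,x) \in \Delta_m$ — viewed inside the zero section of $\mathbb{E}_m$ — satisfies $\pi_m(x^m) = x$. Next, by hypothesis $S$ is a density current of $T_1,\dots,T_m$ and $S' $ is a closed positive current on $X$ with $\pi_m^*(S') = S$; the minimality of the density h-dimension guarantees, via Lemma~\ref{lemma full cohomology minimal dim}, that such objects exist, but here they are part of the given data. Applying the submersion-invariance of Lelong numbers to $\pi_m$ at the point $x^m$ gives
$$\nu(S, x^m) = \nu\big(\pi_m^*(S'), x^m\big) = \nu\big(S', \pi_m(x^m)\big) = \nu(S', x).$$
Finally, Theorem~\ref{the-sosanhsoLelongdensity} applied to the density current $S$ yields $\nu(S, x^m) \ge \nu(T_1,x)\cdots\nu(T_m,x)$, and substituting the identity above produces $\nu(S', x) \ge \nu(T_1,x)\cdots\nu(T_m,x)$, which is the claim.

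There is essentially no serious obstacle in this argument: it is a bookkeeping combination of two already-established facts. The single point that deserves a moment's care is to check that the distinguished point $x^m$ appearing in the statement of Theorem~\ref{the-sosanhsoLelongdensity} indeed lies in the zero section $\Delta_m \subset \mathbb{E}_m$, so that $\pi_m$ maps it precisely to $x$ and the submersion-invariance of Méo applies without modification; once this is verified the chain of equalities and the final inequality close at once.
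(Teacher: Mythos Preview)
Your proposal is correct and is exactly the argument the paper indicates: combine Theorem~\ref{the-sosanhsoLelongdensity} with the invariance of Lelong numbers under submersions \cite[Proposition~2.3]{Meo-auto-inter} applied to $\pi_m$. The paper does not spell out more detail than you have.
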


Let $\sigma_V : \widehat{X} \to X$ be the blow-up of $X$ along $V$, and denote by $\widehat{V} :=\sigma_V^{-1}(V)$ the exceptional divisor. Since $\sigma_V: \widehat X\setminus \widehat V \to X \setminus V$ is an isomorphism, one can pull-back the current $T$ to $\widehat X\setminus \widehat V$ and extend trivially to $\widehat X$ to get a current on $\widehat X$. We denote that current by $\sigma_V^{\diamond} (T)$ and call it the \textit{strict transform} of $T$ by $\sigma_V$. Using the inclusion map $\imath: \widehat V \to \widehat X$, we identify currents on $\widehat V$ with their direct image in $\widehat X$ under $\imath$. This induces a natural morphism from $H^*(\widehat V,\R)$ to $H^*(\widehat X,\R)$. By \cite[Lemma 3.14]{Dinh_Sibony_density}, there exists a class $e_V(T)$ in $H^{2p-2}(\widehat{V},\R)$ such that the class $\sigma_V^{*} \{T\} - \{\sigma_V^{\diamond}(T)\}$ in $H^{2p}(\widehat{X},\R)$ is equal to the image of $e_V(T)$ in $H^{2p}(\widehat{X},\R)$ under the above morphism. We have the following cohomology description for total tangent currents.

\begin{proposition}\label{lemma siu descriptuon for tangent currents}(\cite[Proposition 4.12]{Dinh_Sibony_density})
    Identify the projection $\sigma_V : \widehat{V} \to V$ with the projective fiber bundle $\pi:\P(E) \to V$ and denote by $-h_{\P(E)}$ the tautological class of $\pi: \P(E) \to V$. Suppose that $T$ has no mass on $V$. Then we have
    \[\kappa^V(T)|_{\mathbb{P}(E)} = e_V(T) \smile h_{\mathbb{P}(E)} + \pi^*(\{T\}|_V).\]
    In particular, when $V = \{x\}$, we have $\sigma^* \{T\} - \{\sigma^{\diamond}(T)\} = \nu(T,x) \{[H]\}$ where $H$ is a $(n-p)$-dimensional linear subspace of the exceptional divisor $\sigma^{-1}(x) \simeq \C \P^{n-1}$.
\end{proposition}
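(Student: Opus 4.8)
The plan is to realise the two objects in the formula --- the total tangent class $\kappa^V(T)$ on $\overline E$ and the class of the strict transform $\sigma_V^\diamond(T)$ on $\widehat X$ --- as the two ``halves'' of one closed positive current living on the deformation of $X$ to the normal cone of $V$, and then read off the identity from the Mayer--Vietoris sequence. Set $M:=\mathrm{Bl}_{V\times\{0\}}(X\times\C)$ with the induced projection $p\colon M\to\C$; then $p^{-1}(t)\simeq X$ for $t\neq0$, while $p^{-1}(0)=\widehat X\cup\overline E$, the two components meeting along the divisor $\widehat V=\P(E)$, which appears as the exceptional divisor of $\sigma_V$ inside $\widehat X$ and as the divisor at infinity inside $\overline E=\P(E\oplus\C)$. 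Since the blow-down is an isomorphism over $(X\times\C)\setminus(V\times\{0\})$, the pull-back $\pi_1^*T$ of $T$ by the first projection is a closed positive $(p,p)$-current on $M\setminus p^{-1}(0)\simeq X\times\C^*$, and in the blow-up charts near $V\times\{0\}$ it is exactly the rescaled family $(A_{1/t})_*T$. The first step is to show --- using the uniform mass bounds of \cite[Theorem 4.6]{Dinh_Sibony_density} and the hypothesis that $T$ has no mass on $V$ --- that the trivial extension $\widehat{\mathbb T}$ of $\pi_1^*T$ across $p^{-1}(0)$ remains closed on $M$ and that its slice over $0$ is $\sigma_V^\diamond(T)$ on the component $\widehat X$ together with a tangent current $\Theta$ of $T$ along $V$ on the component $\overline E$, the latter having class $\kappa^V(T)$ by definition.

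Granted this, the cohomological step is formal. Since $M$ deformation-retracts onto $p^{-1}(0)$, the class $\{\widehat{\mathbb T}\}\in H^{2p}(M,\R)$ corresponds, under the Mayer--Vietoris sequence of $p^{-1}(0)=\widehat X\cup_{\widehat V}\overline E$, to the pair $\big(\{\sigma_V^\diamond(T)\},\,\kappa^V(T)\big)$, and any such pair must agree after restriction to the overlap $\widehat V$, i.e.
$$\kappa^V(T)|_{\P(E)}=\{\sigma_V^\diamond(T)\}|_{\widehat V}.$$
On the other hand, the very definition of $e_V(T)$ in \cite[Lemma 3.14]{Dinh_Sibony_density} gives $\{\sigma_V^\diamond(T)\}=\sigma_V^*\{T\}-\imath_*\big(e_V(T)\big)$ in $H^{2p}(\widehat X,\R)$; restricting this to $\widehat V$, using that $\sigma_V|_{\widehat V}$ is the bundle map $\pi\colon\P(E)\to V$ and that $\imath^*\imath_*\beta=\beta\wedge c_1(N_{\widehat V/\widehat X})=-\beta\wedge h_{\P(E)}$ by the self-intersection formula (as $N_{\widehat V/\widehat X}\simeq\Oc_{\P(E)}(-1)$), one obtains
$$\{\sigma_V^\diamond(T)\}|_{\widehat V}=\pi^*\big(\{T\}|_V\big)+e_V(T)\wedge h_{\P(E)},$$
which is exactly the claimed formula. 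For the ``in particular'' statement one specialises to $V=\{x\}$, where $\overline E=\P^n$, $\P(E)$ is the hyperplane at infinity, $\{T\}|_V=0$ for $p\geq1$, and $e_x(T)\in H^{2p-2}(\P^{n-1},\R)=\R\,h^{p-1}$; the formula then reduces to $\sigma^*\{T\}-\{\sigma^\diamond(T)\}=c\,\imath_*(h^{p-1})=c\,\{[H]\}$ for an $(n-p)$-dimensional linear subspace $H\subset\sigma^{-1}(x)$, and the constant $c$ is identified with $\nu(T,x)$ by pairing with $h^{n-p}$ and invoking the integral expression of the Lelong number (recovering Demailly's classical computation, \cite[Chapter~3]{Demailly_ag}, when $p=1$).

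The step I expect to be the real obstacle is the first one: showing that $\widehat{\mathbb T}$ is closed and that its slice over the singular central fibre splits cleanly as $\sigma_V^\diamond(T)+\Theta$ with no spurious contribution concentrated on the common divisor $\widehat V=\P(E)$. This needs the mass estimates of \cite[Theorem 4.6]{Dinh_Sibony_density} for $(A_\lambda)_*T$ near the divisor at infinity, a Skoda--El Mir type argument for the local finiteness and closedness of the extension, and some care with the Mayer--Vietoris bookkeeping for currents that may a priori charge $\widehat V$. Once the geometry of the central fibre $p^{-1}(0)$ is under control, the remaining computation on the projective bundle $\P(E)\to V$ is routine intersection theory.
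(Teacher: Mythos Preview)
The paper does not supply its own proof of this proposition; it is quoted verbatim from \cite[Proposition~4.12]{Dinh_Sibony_density} and used as a black box. So there is no in-paper argument to compare against.

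That said, your deformation-to-the-normal-cone outline is a legitimate route, and the purely cohomological half is correct: the self-intersection computation $\imath^*\imath_*\,e_V(T)=-e_V(T)\wedge h_{\P(E)}$ together with $\sigma_V^*\{T\}|_{\widehat V}=\pi^*(\{T\}|_V)$ does give $\{\sigma_V^\diamond(T)\}|_{\widehat V}=\pi^*(\{T\}|_V)+e_V(T)\wedge h_{\P(E)}$, and the specialisation to $V=\{x\}$ is handled cleanly. The substantive content is, as you say, entirely in Step~1: showing that the trivial extension $\widehat{\mathbb T}$ is closed, that its cohomological restriction to $\overline E$ is exactly $\kappa^V(T)$ and to $\widehat X$ is exactly $\{\sigma_V^\diamond(T)\}$, with no defect supported on $\widehat V$. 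This is where the hypothesis ``$T$ has no mass on $V$'' and the mass estimates of \cite[Theorem~4.6]{Dinh_Sibony_density} are genuinely needed, and one must also justify that the cohomological restriction of $\{\widehat{\mathbb T}\}$ to each component of the singular fibre agrees with the class of the corresponding current-theoretic limit (cohomological restriction and slicing need not coincide a priori on a reducible fibre). None of this is false, but it is exactly the analytic work that the Dinh--Sibony paper carries out, so your proof would in effect be reconstructing their argument in the language of the total space $M$.

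For comparison, Dinh--Sibony's own argument is organised differently: rather than working on the full deformation space, they pass to the blow-up $\widehat X$ along $V$ (their Lemma~4.7 relates tangent currents of $T$ along $V$ to those of $\sigma_V^\diamond(T)$ along the hypersurface $\widehat V$), reduce to the codimension-one case where $E$ is a line bundle, and then read off the class from the projective bundle formula for $H^*(\overline E)$. Your approach packages the same ingredients geometrically via Mayer--Vietoris on the central fibre; it is conceptually pleasant but not shorter, since the hard analytic step is the same in both.
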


\section{Lelong numbers of non-pluripolar products}\label{sec sing positive product}

In this section, we study Lelong numbers of non-pluripolar products of closed positive currents. In particular, we show that one can define the notion of Lelong numbers for non-pluripolar positive products of cohomology classes.

We first recall some properties of relative non-pluripolar products introduced in \break \cite{Viet-generalized-nonpluri}. Let $X$ be a compact K\"ahler manifold of dimension $n$. Let $m,p \in \N$ with $m+p\leq n$. Let $T_{1}, \dots , T_{m}$ be closed positive $(1,1)$-currents and let $T$ be a closed positive $(p,p)$-current on $X$. Let $\theta_j$ be a closed $(1,1)$-form in the cohomology class of $T_j$ for $1\leq j\leq m$. Then we can write $T_{j} = \ddc u_{j} +\theta_{j}$ for some function $u_{j} \in \PSH(X,\theta_{j})$. For $k>0$, we set
\[R_{k} := \mathbf{1}_{\bigcap_{j=1}^{m}\{u_{j}>-k\}}\wedge_{j=1}^{m} (\ddc (\max\{u_{j},-k\})+\theta_{j}) \wedge T,\]
which is a positive $(m+p,m+p)$-current on $X$ by \cite[Lemma 3.2]{Viet-generalized-nonpluri}. By \cite[Theorem 3.7]{Viet-generalized-nonpluri}, the masses of $R_{k}$ are uniformly bounded in $k$, and $\{R_{k}\}_{k}$ converge to a closed  positive $(m+p,m+p)$-current as $k \rightarrow \infty$. We denote the limit by 
\[\langle T_{1} \wedge \dotsi \wedge T_{m} \ \dot{\wedge} \ T \rangle,\] and call it \textit{the relative non-pluripolar product} of $T_1,\ldots,T_m$ with respect to $T$. When $T$ is the current of integration along $X$, this notion recovers the usual non-pluripolar product introduced in \cite{BT_fine_87,GZ-weighted,BEGZ}.

Let $R,R'$ be closed positive $(1,1)$-currents in the same cohomology class. Then we can write $R= \ddc u + \theta$ and $R' = \ddc u' + \theta'$, where $\theta,\theta'$ are closed $(1,1)$-forms in the cohomology class of $R,R'$ and $u \in \PSH(X,\theta) , u' \in \PSH(X,\theta')$. We say that $R'$ is less singular than $R$ if $u \leq u' +O(1)$. The relative non-pluripolar product has the following crucial monotonicity property (see \cite[Theorem 4.4]{Viet-generalized-nonpluri}).

\begin{proposition}
\label{lemma compare cohom relative non pluripolar}
    Suppose that for $1\leq j\leq m$, the currents $T_j,T_j'$ are in the same cohomology class and $T_j'$ is less singular than $T_j$. Then we have
    \[\{\langle T_1 \wedge \cdots \wedge T_m \ \dot{\wedge} \ T \rangle\} \leq \{\langle T_1' \wedge \cdots \wedge T_m' \ \dot{\wedge} \ T   \rangle\}.\]
\end{proposition}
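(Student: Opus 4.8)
The plan is to follow the scheme used for the absolute non-pluripolar product in \cite{Lu-Darvas-DiNezza-singularitytype} and adapted to the relative setting in \cite{Viet-generalized-nonpluri}. The first move is to reduce to modifying a single $(1,1)$-factor. Thus it suffices to prove: for closed positive $(1,1)$-currents $S_2,\dots,S_m$, a closed positive $(p,p)$-current $S$, and closed positive $(1,1)$-currents $S_1,S_1'$ in the same cohomology class with $S_1'$ less singular than $S_1$, one has $\big\{\langle S_1\wedge S_2\wedge\cdots\wedge S_m\ \dot\wedge\ S\rangle\big\}\le\big\{\langle S_1'\wedge S_2\wedge\cdots\wedge S_m\ \dot\wedge\ S\rangle\big\}$; changing $T_1,\dots,T_m$ into $T_1',\dots,T_m'$ one factor at a time (legitimate since the relative non-pluripolar product is symmetric in $T_1,\dots,T_m$ by \cite{Viet-generalized-nonpluri}) then gives the proposition. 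Fixing a smooth representative $\theta_1$ of $\{S_1\}=\{S_1'\}$ and writing $S_1=\theta_1+dd^c u$, $S_1'=\theta_1+dd^c u'$ with $u,u'\in\PSH(X,\theta_1)$, and using that adding a constant to a potential does not change the current, we may normalise so that $u\le u'\le 0$. For $v\in\PSH(X,\theta_1)$ put $P(v):=\langle(\theta_1+dd^c v)\wedge S_2\wedge\cdots\wedge S_m\ \dot\wedge\ S\rangle$, so $P(u)=\langle S_1\wedge\cdots\ \dot\wedge\ S\rangle$ and $P(u')=\langle S_1'\wedge\cdots\ \dot\wedge\ S\rangle$. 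I would carry out the comparison at the level of total masses against $\omega^{n-m-p}$ for a fixed Kähler form $\omega$; promoting this to the cohomological statement is then done as in \cite{Viet-generalized-nonpluri,Lu-Darvas-DiNezza-singularitytype}.

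The main device is the family $u_t:=\max(u,u'-t)$, $t\ge 0$. Since $u\le u'$ we have $u'-t\le u_t\le u'$, so each $u_t$ has the same singularity type as $u'$, while $u_t$ decreases to $u$ as $t\to\infty$. The set $O_t:=\{u>u'-t\}$ is plurifine open and $u_t=u$ on $O_t$; hence, by the plurifine locality of relative non-pluripolar products and the fact that they charge no pluripolar set (\cite{Viet-generalized-nonpluri}, after \cite{BEGZ}), on $O_t$ the current $P(u_t)$ coincides with $P(u)$. Consequently
\[
\int_X P(u_t)\wedge\omega^{n-m-p}\ \ge\ \int_{O_t}P(u)\wedge\omega^{n-m-p}.
\]
As $t\to\infty$ the sets $O_t$ increase to $\{u>-\infty\}$, and $P(u)$ puts no mass on the pluripolar set $\{u=-\infty\}$, so the right-hand side increases to $\int_X P(u)\wedge\omega^{n-m-p}$. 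Therefore
\[
\liminf_{t\to\infty}\ \int_X P(u_t)\wedge\omega^{n-m-p}\ \ge\ \int_X P(u)\wedge\omega^{n-m-p}.
\]

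It remains to establish the complementary bound $\int_X P(u_t)\wedge\omega^{n-m-p}\le\int_X P(u')\wedge\omega^{n-m-p}$ for every $t\ge 0$; combined with the previous display this gives $\int_X P(u)\wedge\omega^{n-m-p}\le\int_X P(u')\wedge\omega^{n-m-p}$, hence the desired inequality. This is the crux of the argument and the step I expect to be the main obstacle: it is itself a monotonicity statement for the pair $(u_t,u')$ with $u_t$ the more singular potential, and it carries the genuine analytic content of the theorem. The way to handle it — as in \cite{Lu-Darvas-DiNezza-singularitytype,Viet-generalized-nonpluri} — is to return to the defining truncations $R_k$ of the relative non-pluripolar products, to compare the $k$-th truncated currents built from $u_t$ with those built from $u'$ via Chern--Levine--Nirenberg type estimates and integration by parts (Stokes), and to show that in the limit $k\to\infty$ the truncated masses attached to $u_t$ are dominated by those attached to $u'$, the extra mass carried by the less singular potential being captured by a non-negative boundary term. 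The delicate points are the control of the ``crossing'' contributions supported on the level sets $\{u=u'-t\}$ and the interchange of the limits $k\to\infty$ and $t\to\infty$; an induction on the bidegree (resp.\ on the number of unbounded factors) handles the decomposition of these boundary terms.

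Granting this, we obtain $\{\langle S_1\wedge S_2\wedge\cdots\ \dot\wedge\ S\rangle\}\le\{\langle S_1'\wedge S_2\wedge\cdots\ \dot\wedge\ S\rangle\}$, and iterating over the $m$ factors $T_1,\dots,T_m$ completes the proof.
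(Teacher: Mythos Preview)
The paper does not prove this proposition at all: it is quoted verbatim from \cite[Theorem~4.4]{Viet-generalized-nonpluri} and only followed by a pointer to \cite{BEGZ,Lu-Darvas-DiNezza-mono,WittNystrom-mono} for earlier monotonicity results. So there is no in-paper argument to compare your proposal against.

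Your outline is the standard route taken in those references: reduce to modifying a single factor, set $u_t=\max(u,u'-t)$ so that $u_t$ has the same singularity type as $u'$ and decreases to $u$, use plurifine locality on $\{u>u'-t\}$ to bound $\int_X P(u)\wedge\omega^{n-m-p}$ from above by $\liminf_t\int_X P(u_t)\wedge\omega^{n-m-p}$, and then compare $P(u_t)$ with $P(u')$. This is correct in strategy and matches how \cite{Viet-generalized-nonpluri} proceeds. Two points are worth flagging, though. First, the step you single out as ``the crux'' --- that $\int_X P(u_t)\wedge\omega^{n-m-p}\le\int_X P(u')\wedge\omega^{n-m-p}$ for potentials of the \emph{same} singularity type --- is exactly where all the analytic work of \cite[Theorem~4.4]{Viet-generalized-nonpluri} lives (the integration-by-parts/CLN argument on the truncations $R_k$); you describe its shape but do not carry it out, so what you have is a faithful sketch rather than a proof. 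Second, passing from the scalar inequality against a single $\omega^{n-m-p}$ to the class inequality $\{P(u)\}\le\{P(u')\}$ (difference pseudoeffective) is not a formality when $m+p<n$: in \cite{Viet-generalized-nonpluri} this is obtained at the level of currents, by exhibiting the difference as a weak limit of positive currents, rather than by a duality argument. Your phrase ``promoting this to the cohomological statement is then done as in \cite{Viet-generalized-nonpluri,Lu-Darvas-DiNezza-singularitytype}'' is accurate as a citation but hides a genuine step.
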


We refer the reader to \cite{BEGZ,Lu-Darvas-DiNezza-mono,WittNystrom-mono} for previous works on the monotonicity property.

Let $m,p \in \N$ with $m+p \leq n$. Let $\alpha_{1}, \dots, \alpha_{m}$ be big cohomology classes, and let $T$ be a closed positive $(p,p)$-current. The monotonicity property above allows us to define the non-pluripolar products to $T$ of classes $\alpha_{1} , \dots , \alpha_{m}$, which is denoted and defined as 
\[
    \langle \alpha_{1} \wedge \dotsi \wedge \alpha_{m}\ \dot{\wedge}\ T \rangle := \{\langle T_{\min,\alpha_1} \wedge \dotsi \wedge T_{\min,\alpha_m}\ \dot{\wedge}\ T \rangle\},
\]
where $T_{\min,\alpha_j}$ is a closed positive $(1,1)$ current in $\alpha_{j}$ with minimal singularities. In particular, one gets
$$\langle \alpha_1 \wedge \cdots \wedge \alpha_m\rangle = \{\langle T_{\min,\alpha_1} \wedge \cdots \wedge T_{\min,\alpha_m}\rangle\},$$
see also \cite{BEGZ}.

\begin{theorem}\label{theorem Lelong of relative non-pluripolar product}
    Let $X$ be a compact K\"ahler manifold of dimension $n$. Let $m,p \in \N$ with $m+p\leq n$. For $1\leq j \leq m$, let $T_j,T_j'$ be closed positive $(1,1)$-currents on $X$ such that $T_j,T_j'$ are in the same cohomology class and of the same singularity type. Let $T$ be a closed positive $(p,p)$-current. Then we have  
  $$\nu\big(\langle T_1\wedge \cdots \wedge T_m \ \dot{\wedge} \ T \rangle,x\big) = \nu\big(\langle T_1' \wedge \cdots \wedge T_m' \ \dot{\wedge}  \ T\rangle,x\big)$$
    for every $x \in X$.
\end{theorem}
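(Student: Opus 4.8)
The plan is to reduce the equality of Lelong numbers to a local statement and exploit the monotonicity property (Proposition~\ref{lemma compare cohom relative non pluripolar}) together with the comparison between density currents and relative products that already underlies Theorem~\ref{theorem main compare Lelong}. Since $T_j$ and $T_j'$ are in the same cohomology class and of the same singularity type, writing $T_j = \ddc u_j + \theta_j$ and $T_j' = \ddc u_j' + \theta_j$ we have $u_j - u_j' = O(1)$, so both are less singular than, and less singular than, each other's regularizations; in particular, near any fixed point $x$ the local potentials of $T_j$ and $T_j'$ differ by a bounded function. Lelong numbers at $x$ depend only on the singularity type of the potential near $x$, so the natural strategy is to show that the relative non-pluripolar products $\langle \wedge_j T_j \ \dot\wedge\ T\rangle$ and $\langle \wedge_j T_j' \ \dot\wedge\ T\rangle$ have, near $x$, potentials of the same singularity type — or at least that their Lelong numbers at $x$ coincide.

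First I would handle the symmetric setup: by relabelling it suffices to prove $\nu(\langle \wedge_j T_j\ \dot\wedge\ T\rangle, x) \le \nu(\langle \wedge_j T_j'\ \dot\wedge\ T\rangle, x)$, since the reverse inequality follows by exchanging the roles of the primed and unprimed currents. A clean way to access this is through the local structure of the truncated currents $R_k = \mathbf{1}_{\cap\{u_j>-k\}} \wedge_j(\ddc\max\{u_j,-k\}+\theta_j)\wedge T$ defining the product: on the open set $\{u_j > -k \text{ for all } j\}$ one has genuine intersection of currents with bounded potentials, so the monotonicity/comparison machinery of \cite{Viet-generalized-nonpluri} applies. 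Since $u_j' \ge u_j - C$, the set $\{u_j' > -k\}$ contains $\{u_j > -k+C\}$, and one can compare the truncations at levels $k$ and $k+C$; passing to the limit as $k\to\infty$ and using that the comparison of cohomology classes in Proposition~\ref{lemma compare cohom relative non pluripolar} is in fact local in nature (the argument there is a pointwise/local one after subtracting potentials), one obtains a local domination of potentials up to a pluriharmonic term. From such a local inequality of quasi-psh potentials modulo $O(1)$, the inequality of Lelong numbers at $x$ is immediate from the definition of the Lelong number as $\liminf_{r\to 0}\big(\text{average of the potential on } |z-x|=r\big)/\log r$ up to normalization.

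Alternatively, and perhaps more in the spirit of the paper, I would route the argument through density currents: by the comparison result behind Theorem~\ref{theorem main compare Lelong} (and Corollary~\ref{cor-sosanhsoLelongdensity}), the Lelong number of the relative product at $x$ is squeezed between quantities depending only on the local mass distributions of the $T_j$ near $x$ and of $T$ near $x$; since $T_j$ and $T_j'$ have the same singularity type, these local data agree, forcing equality. Concretely I would use that the tangent cone / density h-dimension computations are insensitive to bounded perturbations of the potentials, so the total density class and, more importantly, the Lelong number of the density current at the diagonal point $x^m$ are unchanged when passing from $(T_1,\dots,T_m,T)$ to $(T_1',\dots,T_m',T)$, and then transfer this back to the relative non-pluripolar product via the identification of its class with a density-current expression.

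The main obstacle I anticipate is the second step: the relative non-pluripolar product is defined as a \emph{global} limit of truncations, and while the singularity type of the $T_j$ is a local notion, it is not a priori obvious that two globally different limiting processes produce currents with the same local singularity type near $x$. The delicate point is to localize the monotonicity argument of \cite[Theorem 4.4]{Viet-generalized-nonpluri} — i.e. to show that the comparison $\{R_k\}\le\{R_k'\}$ at the level of cohomology can be upgraded to a local statement comparing the potentials of $R_k$ and $R_k'$ on a small ball around $x$, uniformly in $k$, so that one may pass to the limit and compare the potentials of the two relative products near $x$. I expect this to require a careful use of the plurifine locality of non-pluripolar products together with the fact that on $\{u_j > -k\}\cap\{u_j' > -k\}$ the two truncated products literally agree as currents (both being classical intersections of closed positive $(1,1)$-currents with bounded potentials times $T$), so the only discrepancy comes from the ``boundary'' sets, whose contribution to the Lelong number at $x$ vanishes in the limit; making this last assertion precise is where the real work lies.
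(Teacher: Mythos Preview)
Your proposal has a genuine gap, and the paper's proof uses a trick you have not identified.

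First, a conceptual issue: the relative non-pluripolar product $\langle T_1\wedge\cdots\wedge T_m\ \dot\wedge\ T\rangle$ is a closed positive $(m+p,m+p)$-current, and for $m+p>1$ there is no notion of a single local quasi-psh potential. Your first approach speaks of ``local domination of potentials up to a pluriharmonic term'' and invokes the formula $\liminf_{r\to 0}(\text{average of the potential})/\log r$ for the Lelong number, but this formula is specific to $(1,1)$-currents. For higher bidegree, the Lelong number is defined through the trace measure, and there is no obvious way to turn a cohomological inequality into a pointwise comparison of ``potentials''. So the localization you hope for cannot proceed in the form you describe.

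Second, and more importantly, the key idea you are missing is to \emph{blow up at the point $x$} and thereby convert the Lelong number into a cohomological quantity. By Proposition~\ref{lemma siu descriptuon for tangent currents}, if $\pi:\widehat X\to X$ is the blow-up at $x$ and $S$ has no mass on $\{x\}$, then $\nu(S,x)$ is determined by the pair of classes $\{S\}$ and $\{\pi^\diamond(S)\}$ (via $\pi^*\{S\}-\{\pi^\diamond(S)\}=\nu(S,x)\{[H]\}$). So it suffices to show that both $\{S\}=\{S'\}$ and $\{\pi^\diamond(S)\}=\{\pi^\diamond(S')\}$, where $S,S'$ are the two relative products. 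The first equality is immediate from the monotonicity Proposition~\ref{lemma compare cohom relative non pluripolar} applied in both directions. For the second, one checks that $\pi^\diamond(S)=\langle \pi^*T_1\wedge\cdots\wedge\pi^*T_m\ \dot\wedge\ \pi^\diamond(T)\rangle$ (using that $\pi$ is biholomorphic off $\pi^{-1}(x)$ and that $T$ has no mass on $\{x\}$), and then applies monotonicity again on $\widehat X$, since $\pi^*T_j$ and $\pi^*T_j'$ are still in the same class and of the same singularity type. This avoids entirely the need to localize anything: the global monotonicity is applied twice, once on $X$ and once on $\widehat X$, and the blow-up does the work of extracting the local information at $x$.

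Your second approach via density currents is too vague to assess, but note that Theorem~\ref{theorem main compare Lelong} and Corollary~\ref{cor-sosanhsoLelongdensity} only give one-sided bounds; there is no ``squeeze'' available from those results alone.
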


\begin{proof}

We recall the definition of pull-back of closed positive $(1,1)$-currents by a holomorphic map. Let $T$ be a closed positive $(1,1)$-current on a complex manifold $Y$. Let $\pi:X\to Y$ be a holomorphic map. Suppose that $\pi(X)$ is not contained in the singular set of $T$. Let $\varphi$ be a local potential of $T$. Then we can define the pull-back $\pi^* T$ locally by $dd^c ( \varphi\circ \pi)$. This defines a closed positive $(1,1)$-current on $X$ and we have $\{\pi^* T\} = \pi^* \{T\}$.

Let $\pi: \widehat{X} \to X$ be the blow-up at point $x$. Let $S:= \langle T_1 \wedge \cdots \wedge T_m \ \dot{\wedge} \ T\rangle$ and $S' := \langle T_1' \wedge \cdots \wedge T'_m 
 \ \dot{\wedge} \ T\rangle$. By Proposition~\ref{lemma siu descriptuon for tangent currents}, it suffices to show that $\{S\} = \{S'\}$ and $\{\pi^\diamond(S) \}= \{\pi^\diamond (S')\}$. By Proposition~\ref{lemma compare cohom relative non pluripolar}, it is clear that $\{S\} = \{S'\}$. We consider the relative non-pluripolar product $\langle \pi^* T_1 \wedge \cdots \wedge \pi^* T_m \ \dot{\wedge} \ \pi^\diamond(T)\rangle $. Since $T$ puts no mass on $\{x\}$, $\pi^\diamond(T)$ puts no mass on $\pi^{-1}(x)$. Moreover, we have $\pi$ is a biholomorphic map outside $\pi^{-1}(x)$. This implies that $\langle \pi^* T_1 \wedge \cdots \wedge \pi^* T_m \ \dot{\wedge} \ \pi^\diamond(T)\rangle  = \pi^\diamond (\mathbf{1}_{X\setminus\{x\}} S) = \pi^\diamond (S)$ as $T$ puts no mass on $\{x\}$. A similar statement holds for $S'$. Moreover, we have $\pi^* T_j$ and $\pi^* T_j'$ are in the same cohomology class and of the same singularity type. Then, by Proposition~\ref{lemma compare cohom relative non pluripolar}, $\{\pi^\diamond(S) \} = \{\pi^\diamond (S')\}$. The proof is complete.
\end{proof}

Let $\alpha_1,\ldots, \alpha_m$ be big cohomology $(1,1)$-classes. 
We define the Lelong number of $\langle \alpha_1 \wedge \cdots \wedge \alpha_m \rangle$ at $x\in X$ as
$$\nu\big(\langle \alpha_1 \wedge \cdots \wedge \alpha_m \rangle, x\big):= \nu\big(\langle T_{\min,\alpha_1} \wedge \cdots \wedge T_{\min,\alpha_m} \rangle, x \big).$$
By Theorem \ref{theorem Lelong of relative non-pluripolar product}, this definition  is independent of the choice of currents with minimal singularities. 
We also have the following theorem concerning total density classes.

\begin{theorem} \label{th-densityclassbangnhau}
For $1\leq j\leq m$, let $T_j,T_j'$ be closed positive $(1,1)$-currents on $X$ such that $T_j,T_j'$ are in the same cohomology class and of the same singularity type. Put 
\[S:= \langle T_1 \wedge \cdots \wedge T_m \rangle \text{ and }S':= \langle T_1' \wedge \cdots \wedge T_m' \rangle.\] Let $T$ be a closed positive $(p,p)$-current on $X$. Then the total density classes $\kappa(S,T)$ and $\kappa(S',T)$ are equal. 
\end{theorem}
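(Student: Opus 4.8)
\emph{Proof plan.} The plan is to realize $\kappa(S,T)$ as the total tangent class $\kappa^{\Delta_2}(S\otimes T)$ of the tensor product current $S\otimes T$ on $X^2$ along the diagonal $\Delta_2\simeq X$, and to show that this class is determined by two pieces of cohomological data, each of which we then check is the same for $(S,T)$ and $(S',T)$. The whole argument is a ``relative diagonal'' version of the proof of Theorem~\ref{theorem Lelong of relative non-pluripolar product}, with the blow-up at a point replaced by the blow-up $\sigma\colon\widehat{X^2}\to X^2$ of $X^2$ along $\Delta_2$, with exceptional divisor $\widehat{\Delta_2}$.

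First I would check that $S\otimes T$ puts no mass on $\Delta_2$. If $m+p<n$ this is automatic, since $S\otimes T$ has bidimension $2n-m-p>n=\dim\Delta_2$. If $m+p=n$, then $\mathbf 1_{\Delta_2}(S\otimes T)$ is a closed positive current of bidimension $n$ supported on the irreducible $n$-dimensional set $\Delta_2$, hence equals $c\,[\Delta_2]$ for some $c\ge 0$; and $c$ is the Lelong number of $S\otimes T$ at a generic point $x^2$ of $\Delta_2$, which by the standard estimate for Lelong numbers of tensor products is controlled by the generic value of $\nu(S,x)\,\nu(T,x)$. This forces $c=0$, because the set of $x$ with $\nu(S,x)>0$ (resp. $\nu(T,x)>0$) is contained in a countable union of proper analytic subsets of $X$; and in any event $c(S\otimes T)=c(S'\otimes T)$ since $\nu(S,\cdot)=\nu(S',\cdot)$ by Theorem~\ref{theorem Lelong of relative non-pluripolar product}. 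Hence Proposition~\ref{lemma siu descriptuon for tangent currents} applies with $V=\Delta_2$ and yields
\[\kappa^{\Delta_2}(S\otimes T)\big|_{\mathbb P(\mathbb E_2)}=e_{\Delta_2}(S\otimes T)\wedge h+\pi^*\big(\{S\otimes T\}|_{\Delta_2}\big),\]
where $\pi\colon\mathbb P(\mathbb E_2)\to\Delta_2$ is the projection, $\{S\otimes T\}|_{\Delta_2}=\{S\}\wedge\{T\}$, and $e_{\Delta_2}(S\otimes T)$ is governed, via \cite[Lemma~3.14]{Dinh_Sibony_density}, by $\sigma^*\{S\otimes T\}-\{\sigma^\diamond(S\otimes T)\}$. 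Thus it suffices to prove $\{S\}=\{S'\}$ and $\{\sigma^\diamond(S\otimes T)\}=\{\sigma^\diamond(S'\otimes T)\}$.

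The equality $\{S\}=\{S'\}$ is immediate from Proposition~\ref{lemma compare cohom relative non pluripolar} applied in both directions, since $T_j$ and $T_j'$ have the same singularity type. For the strict transforms, let $p_1,p_2\colon X^2\to X$ be the projections. Pull-back by the submersion $p_1$ commutes with non-pluripolar products, and wedging the resulting current with the fixed current $p_2^*T$ produces the relative non-pluripolar product, so by \cite{Viet-generalized-nonpluri} one has $S\otimes T=\langle p_1^*T_1\wedge\cdots\wedge p_1^*T_m\ \dot\wedge\ p_2^*T\rangle$. Since $p\le n-1$, the current $p_2^*T$ puts no mass on $\Delta_2$, hence $\sigma^\diamond(p_2^*T)$ puts no mass on $\widehat{\Delta_2}$; as $\sigma$ is biholomorphic off $\widehat{\Delta_2}$ and $S\otimes T$ puts no mass on $\Delta_2$, exactly as in the proof of Theorem~\ref{theorem Lelong of relative non-pluripolar product} one gets
\[\sigma^\diamond(S\otimes T)=\big\langle \sigma^*p_1^*T_1\wedge\cdots\wedge\sigma^*p_1^*T_m\ \dot\wedge\ \sigma^\diamond(p_2^*T)\big\rangle,\]
and likewise for $S'$. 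Now $\sigma^*p_1^*T_j$ and $\sigma^*p_1^*T_j'$ lie in the same cohomology class and have the same singularity type (the difference of potentials, being bounded, stays bounded after pull-back by the holomorphic map $\sigma\circ p_1$), so Proposition~\ref{lemma compare cohom relative non pluripolar} applied in both directions gives equality of the classes of these two relative non-pluripolar products, i.e. $\{\sigma^\diamond(S\otimes T)\}=\{\sigma^\diamond(S'\otimes T)\}$. Combined with $\{S\}=\{S'\}$ and the displayed description of $\kappa^{\Delta_2}$, this gives $\kappa(S,T)=\kappa(S',T)$.

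The main obstacle I anticipate is the cohomological bookkeeping in the third paragraph: one must be sure that the total tangent class $\kappa^{\Delta_2}(S\otimes T)\in H^{*}(\overline{\mathbb E}_2,\mathbb R)$ is genuinely recovered from $\{S\otimes T\}$ and $\{\sigma^\diamond(S\otimes T)\}$ --- in particular that the defect class $e_{\Delta_2}(S\otimes T)$ is an invariant of that pair, and not merely of its image in $H^{*}(\widehat{X^2},\mathbb R)$ (the morphism $H^{*}(\widehat{\Delta_2})\to H^{*}(\widehat{X^2})$ need not be injective). I expect this to follow from the Leray--Hirsch description of $H^{*}(\mathbb P(\mathbb E_2))$ as a free module over $H^{*}(\Delta_2)$ used by Dinh--Sibony, so that $e_{\Delta_2}(S\otimes T)$ is literally the coefficient of $h^1$ in the expansion of $\kappa^{\Delta_2}(S\otimes T)|_{\mathbb P(\mathbb E_2)}$; this is the point I would write out most carefully. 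The ``no mass on $\Delta_2$'' claim and the tensor-product Lelong estimate invoked in the second paragraph also deserve a careful treatment, though they should be routine.
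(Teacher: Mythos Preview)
Your proposal is correct and follows essentially the same route as the paper's proof. Both arguments reduce $\kappa(S,T)=\kappa(S',T)$ to the identity $\kappa^{\Delta_2}(S\otimes T)=\kappa^{\Delta_2}(S'\otimes T)$, express $S\otimes T$ as the relative non-pluripolar product $\langle p_1^*T_1\wedge\cdots\wedge p_1^*T_m\ \dot\wedge\ p_2^*T\rangle$, blow up $X^2$ along $\Delta_2$, identify the strict transform again as a relative non-pluripolar product, and conclude via Proposition~\ref{lemma compare cohom relative non pluripolar} applied in both directions. The only substantive differences are presentational: the paper first treats the illustrative special case $T=[V]$ for a submanifold $V$, and it writes out explicitly (via the cutoffs $u_{j,k}=\max\{u_j,-k\}$) the identity $S\otimes T=\langle p_1^*T_1\wedge\cdots\ \dot\wedge\ p_2^*T\rangle$ that you cite from \cite{Viet-generalized-nonpluri}. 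Conversely, you are more explicit than the paper about why $S\otimes T$ and $p_2^*T$ have no mass on $\Delta_2$, and you flag the cohomological bookkeeping point---that one must know $e_{\Delta_2}$ is recoverable from $\sigma^*\{S\otimes T\}-\{\sigma^\diamond(S\otimes T)\}$---which the paper uses implicitly. For the diagonal this concern is indeed harmless: the restriction $H^*(X^2)\to H^*(\Delta_2)\simeq H^*(X)$ is surjective, so together with the class of the exceptional divisor one sees that $\iota^*\colon H^*(\widehat{X^2})\to H^*(\widehat{\Delta_2})$ is surjective, hence $\iota_*$ is injective and $e_{\Delta_2}$ is uniquely determined; this confirms the expectation you recorded in your final paragraph.
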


By taking $T$ to be the Dirac mass at a point $x\in X$ in Theorem \ref{th-densityclassbangnhau}, using Proposition \ref{lemma siu descriptuon for tangent currents},  we recover Theorem \ref{theorem Lelong of relative non-pluripolar product} in the special case where $T=[X]$, which is enough to define Lelong numbers of positive products. We chose to present Theorem \ref{theorem Lelong of relative non-pluripolar product} first because it does not involve density currents, thus it is easier to explain the idea of the proof.

\begin{proof} We need to show that $\kappa^{\Delta_2}(S \otimes T) = \kappa^{\Delta_2}(S'\otimes T)$. Let $\pi_1 :X\times X \to X$ and $\pi_2 : X\times X \to X$ be the projection to the first and second variables. Observe that if  $T_j$ is of bounded potential for $j=1,\ldots,m$, then we have
$$S\otimes T = \pi_1^*(T_1) \wedge \cdots \wedge \pi_1^*(T_m) \wedge \pi_2^*(T)$$
because of the continuity of Monge-Amp\`ere operators and regularization. In general, we write locally $T_j= \ddc u_j$ for $1 \le j \le m$ on a local chart $U$ in $X$ and put 
$$u_{j,k}:= \max\{u_j, -k\}, \quad S_{(k)}:= \ddc u_{1,k} \wedge \cdots \wedge  \ddc u_{m,k}.$$
As above, we have 
$$S_{(k)} \otimes T = \pi_1^* (\ddc u_{1,k}) \wedge \cdots \wedge \pi_1^* (\ddc u_{m,k}) \wedge \pi_2^* (T).$$
It follows that 
\begin{align*}
(\mathbf{1}_{\bigcap_j \{u_j>-k\}} S_{(k)}) \otimes T &= \mathbf{1}_{\bigcap_j \pi_1^{-1}\{u_j>-k\}} (S_{(k)} \otimes T)\\
&=  \mathbf{1}_{\bigcap_j \pi_1^{-1}\{u_j>-k\}}\pi_1^*( \ddc u_{1,k}) \wedge \cdots \wedge \pi_1^* (\ddc u_{m,k} )\wedge \pi_2^* (T).
\end{align*}
Letting $k\to \infty$ gives
$$S\otimes T = \langle \pi_1^*(T_1) \wedge \cdots \wedge \pi_1^*(T_m) \ \dot{\wedge} \ \pi_2^*(T) \rangle.$$
Analogously, we get
$$ S'\otimes T = \langle \pi_1^*(T_1') \wedge \cdots \wedge \pi_1^*(T_m') \ \dot{\wedge} \ \pi_2^*(T) \rangle.$$

Let $\sigma_{\Delta_2} :\widehat{X\times X} \to X\times X$ be the blow-up along the diagonal $\Delta_2$. Since $\pi_2^*(T)$ puts no mass on $\Delta_2$, argue similarly as in the proof of Theorem~\ref{theorem Lelong of relative non-pluripolar product}, we have
\[\langle \sigma_{\Delta_2}^*(\pi_1^*(T_1)) \wedge \cdots \wedge \sigma_{\Delta_2}^*(\pi_1^*(T_m) ) \ \dot{\wedge} \ \sigma_{\Delta_2} ^{\diamond}(\pi_2^*(T)) \rangle = \sigma_{\Delta_2} ^{\diamond} (S\otimes T). \]
Similarly, we also get
\[\langle \sigma_{\Delta_2}^*(\pi_1^*(T_1')) \wedge \cdots \wedge \sigma_{\Delta_2}^*(\pi_1^*(T_m') ) \ \dot{\wedge} \ \sigma_{\Delta_2} ^{\diamond}(\pi_2^*(T)) \rangle = \sigma_{\Delta_2} ^{\diamond} (S'\otimes T). \]
By Proposition~\ref{lemma siu descriptuon for tangent currents} and Proposition~\ref{lemma compare cohom relative non pluripolar}, we obtain the desired assertion.
    \end{proof}

\begin{remark} We can easily construct a closed positive $(1,1)$-current $T$ in a K\"ahler class $\alpha$ such that $T$ is smooth outside a given point $x_0$ and $\nu(T,x_0)>0$. It follows that $\langle T^k \rangle =T^k$ (both sides are equal outside $x_0$ and have no mass on $x_0$) for $1 \le k \le n-1$. Consequently, we have $\nu(\langle T^k\rangle, x_0)= \nu(T^k,x_0)>0$. From this we see that the conclusion of Theorem \ref{cor-divisorpartialderivitve} concerning the Lelong number of $\langle T_{\min,\alpha}^k \rangle$ does not remain true if one replaces $T_{\min,\alpha}$ by an arbitrary current in $\alpha$.   
\end{remark}

We now discuss an extension of Conjecture \ref{conj-lelong}. We recall the notion of nefness in higher codimension introduced in \cite{XiaojunWu-nefnessHigherDim}. Let $\alpha$ be a big class in $H^{1,1}(X,\R)$. We say that $\alpha$ is \textit{nef in codimension $k$} if for every irreducible analytic subset $Z \subset X$ of codimension at most $k$, we have $\nu(\alpha,Z) = 0$. It is known that the notion of modified-nef introduced in \cite{Boucksom_anal-ENS} is equivalent to nef in codimension $1$. Moreover, $\alpha$ is nef in codimension $n-1$ if and only if $\alpha$ is nef. We propose the following generalized conjecture of Conjecture~\ref{conj-lelong}.

 \begin{conjecture}\label{conj nef codim k}
Let $X$ be a compact K\"ahler manifold of dimension $n$ and $k\in \N$ such that $1\le k \le n-1$.   Let $\alpha \in H^{1,1}(X,\R)$ be a big class such that $\alpha$ is nef in codimension $k$. Then $\nu( \langle \alpha^{n-k}  \rangle,x) = 0$ for every $x\in X$.
    \end{conjecture}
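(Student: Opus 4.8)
The plan is to extend, for general $k$, the strategy behind Corollary~\ref{cor-divisorpartialderivitve}. Fix $x\in X$ and set $R:=\langle\alpha^{n-k}\rangle=\{\langle T_{\min,\alpha}^{n-k}\rangle\}$. The first step is an extension of Theorem~\ref{theorem main compare Lelong} in which the right-hand factor is a closed positive current $T$ of bidegree $(k,k)$ rather than $(1,1)$: that $\{\langle T_1\wedge\cdots\wedge T_{n-k}\ \dot{\wedge}\ T\rangle\}\le\{\langle T_1\wedge\cdots\wedge T_{n-k}\rangle\}\wedge\{T\}$, with equality forcing $\nu(\langle T_1\wedge\cdots\wedge T_{n-k}\rangle,y)\,\nu(T,y)=0$ for all $y$. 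The proof of Theorem~\ref{theorem main compare Lelong} — the density-current comparison underlying \eqref{ine-densityrelativeRT}, together with the blow-up and the Siu-type description of Proposition~\ref{lemma siu descriptuon for tangent currents} — does not seem to use the bidegree of the right factor, so one expects it to carry over with only cosmetic changes. Applying it with $T=[Z]$ for a smooth codimension-$k$ subvariety $Z$ through $x$ (so $\nu([Z],x)=\mult_x Z=1$) yields: if $\langle T_{\min,\alpha}^{n-k}\ \dot{\wedge}\ [Z]\rangle$ is of full mass, i.e.\ $\langle\alpha^{n-k}\rangle|_{X|Z}=R\wedge\{Z\}$, then $\nu(R,x)=0$.

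The real content is then to produce, through an arbitrary $x$ and using only that $\alpha$ is nef in codimension $k$, a codimension-$k$ cycle $Z\ni x$ for which the restricted volume $\langle\alpha^{n-k}\rangle|_{X|Z}$ has full mass. The hypothesis should enter through the fact that $T_{\min,\alpha}$, and hence (by a comparison of Lelong numbers in the spirit of Theorem~\ref{the-sosanhsoLelongdensity} and Corollary~\ref{cor-sosanhsoLelongdensity}) also $\langle T_{\min,\alpha}^{n-k}\rangle$, carries no Lelong mass along subvarieties of codimension $\le k$; so for a sufficiently general complete intersection $Z$ of codimension $k$ through $x$, the deficit in the comparison above is supported over $Z\cap\Sing \langle T_{\min,\alpha}^{n-k}\rangle$, a set that a generic such $Z$ meets only in dimension $0$, and it should vanish by the density-current analysis (Lemma~\ref{lemma full cohomology minimal dim} and the results of \cite{Dinh_Sibony_density,SuVu-volume-lelong}). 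When $X$ is projective and $k=1$ this full-mass statement is exactly Witt Nyström's theorem \cite{WittNystrom-duality}, as invoked in the projective part of Corollary~\ref{cor-divisorpartialderivitve}; the conjecture asks for it for all $k$ and, more boldly, on arbitrary compact Kähler manifolds.

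A tempting alternative is induction on $k$ by restriction to a generic smooth hypersurface $Y\ni x$: there $\alpha|_Y$ is big and nef in codimension $k-1$, so $\nu(\langle(\alpha|_Y)^{n-k}\rangle,x)=0$ by induction, while $\nu(\langle\alpha^{n-k}\rangle,x)=\nu(\langle\alpha^{n-k}\rangle|_Y,x)$ because a generic slice through $x$ preserves Lelong numbers. The gap is that in general one only has $\langle\alpha^{n-k}\rangle|_Y\le\langle(\alpha|_Y)^{n-k}\rangle$ — since $T_{\min,\alpha}|_Y$ need not be of minimal singularities in $\alpha|_Y$ — and a cohomological inequality controls no pointwise Lelong number; proving the restriction formula $\langle\alpha^{n-k}\rangle|_Y=\langle(\alpha|_Y)^{n-k}\rangle$ for general $Y$ is itself a full-mass statement, essentially equivalent to the first route.

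In either approach the main obstacle is the same: showing that a relative non-pluripolar product of $\langle\alpha^{n-k}\rangle$-type currents with a codimension-$k$ cycle attains full mass under the nef-in-codimension-$k$ hypothesis. This is a higher-codimension avatar of the derivative formula \eqref{eq-BDPP}, known for divisors on projective manifolds but open in general, and its proof appears to require a quantitative "no concentration of density mass on thin analytic sets" principle strictly stronger than the qualitative vanishing of Lelong numbers in Theorem~\ref{theorem main compare Lelong}. Moreover, on a genuinely non-projective Kähler manifold there may be too few subvarieties to set up either argument at all, which is ultimately why the statement is offered only as a conjecture.
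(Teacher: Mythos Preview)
This statement is recorded in the paper as a \emph{conjecture}; the paper offers no proof and only remarks afterward that the case $k=1$ with $X$ projective follows from Theorem~\ref{the-main-Zariski} (hence ultimately from Witt Nystr\"om's result). Your write-up is likewise not a proof but a strategy sketch that ends by acknowledging the problem is open, so on the bottom line you and the paper agree.

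That said, one step you describe as ``only cosmetic'' is not. You claim the proof of Theorem~\ref{theorem main compare Lelong} does not use that $T$ has bi-degree $(1,1)$. It does, at the key moment in Theorem~\ref{theorem main compare Lelongphay}: to write a density current $S$ of $R=\langle\wedge_j T_j\rangle$ and $T$ as $\pi^*(S')$ one needs the density h-dimension to be \emph{minimal}. The paper gets the upper bound (strictly less than $n-m$) from the fact that $R$ charges no mass on $\{\nu(T,\cdot)>0\}$, and the lower bound $n-m-1$ purely from bi-degree because $T$ is $(1,1)$; the two together pin the h-dimension to $n-m-1$. For $T$ of bi-degree $(k,k)$ with $k\ge2$ the lower bound drops to $n-m-k$, so the h-dimension is no longer forced to be minimal and the density current need not be a pullback from $\Delta$. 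Without that, neither inequality~\eqref{ine-densityrelativeRT} nor the Lelong-number conclusion has an obvious analogue. A second, smaller issue: your appeal to Theorem~\ref{the-sosanhsoLelongdensity} and Corollary~\ref{cor-sosanhsoLelongdensity} to deduce that $\langle T_{\min,\alpha}^{n-k}\rangle$ has no Lelong number along codimension-$\le k$ subvarieties goes the wrong way---those results give \emph{lower} bounds for Lelong numbers of density currents, not upper bounds for those of non-pluripolar products.
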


\section{Comparison of Lelong numbers}\label{sec proof of main results}

In this section, we compare various notions of intersection of currents that will be used in the proof of the main theorems. Let $X$ be a compact K\"ahler manifold of dimension $n$ and $\omega$ be a K\"ahler form on $X$. We first prove the following theorem that compares the Dinh-Sibony product and the classical product, this can be viewed as a different version of Theorem~\ref{th-density11agreewithclassical}.

\begin{theorem}\label{theorem density for bounded potential mix version} Let $m,p \in \N$ with $m+p \le n$. Let $T_1,\ldots,T_m$ be closed positive $(1,1)$-currents with local bounded potentials on $X$ for $1\leq j\leq m$.   Let $T$ be a closed positive $(p,p)$-current. Then, the Dinh-Sibony product $(T_1 \wedge \cdots \wedge T_m) \curlywedge T$ is well-defined and equal to the classical product $T_1\wedge \cdots \wedge T_m \wedge T$.
\end{theorem}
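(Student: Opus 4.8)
The plan is as follows. First, recall that by Bedford--Taylor theory $R:=T_1\wedge\cdots\wedge T_m$ is a well-defined closed positive $(m,m)$-current and $R\wedge T=T_1\wedge\cdots\wedge T_m\wedge T$ is a well-defined closed positive $(m+p,m+p)$-current; moreover, on $X\times X$ the current $R\otimes T$ is itself a Bedford--Taylor product of the bounded-potential $(1,1)$-currents pulled back from the first factor with the pullback of $T$ from the second factor. By Definition~\ref{def-DSproduct}, what must be shown is that every tangent current of $R\otimes T$ along the diagonal $\Delta_2\subset X\times X$ equals $\pi_2^*(R\wedge T)$, where $\pi_2\colon\mathbb{E}_2\to\Delta_2\simeq X$ is the canonical projection; the uniqueness together with the pullback form then give that $(T_1\wedge\cdots\wedge T_m)\curlywedge T$ is well-defined and equals $R\wedge T$. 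Since a tangent current along $\Delta_2$ is determined by $R\otimes T$ in an arbitrarily small neighbourhood of $\Delta_2$, and the sets $B\times B$ with $B$ a coordinate ball of a fixed covering of $X$ form such a neighbourhood, it suffices to fix such a ball $B$ and to identify the restriction over $B$ of any tangent current with $\pi_2^*(R\wedge T)$.

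On $B$ I would write $T_j=\ddc u_j$ with $u_j$ bounded psh, take the standard smooth psh regularizations $u_{j,k}:=u_j*\rho_{1/k}$, which decrease to $u_j$ and are uniformly bounded on a slightly smaller ball, and set $R_k:=\bigwedge_{j}\ddc u_{j,k}$, a smooth closed positive form. By the Bedford--Taylor continuity theorem, $R_k\to R$ and $R_k\wedge T\to R\wedge T$ weakly on $B$. For the smooth currents $R_k$ the tangent current is easy to compute: choosing coordinates near $\Delta_2$ so that $\Delta_2=\{\zeta=0\}$, $\zeta=w-z$, and the dilation is $A_\lambda(z,\zeta)=(z,\lambda\zeta)$, one finds $(A_\lambda)_*(R_k\otimes T)=\pi_2^*R_k\wedge\sigma_\lambda^*T$ where $\sigma_\lambda(z,\zeta)=z+\zeta/\lambda\to z$ as $\lambda\to\infty$, so that, $R_k$ being smooth, this converges to $\pi_2^*R_k\wedge\pi_2^*T=\pi_2^*(R_k\wedge T)$. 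Hence the tangent current of $R_k\otimes T$ along $\Delta_2$ over $B$ is unique and equal to $\pi_2^*(R_k\wedge T)$.

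The remaining step --- passing from $R_k$ to $R$ inside the dilation limit --- is the main point, and I expect it to be the crux. The difficulty is that tangent currents are \emph{not} continuous under arbitrary weak convergence of the underlying currents, so the boundedness of the potentials $u_j$ must be exploited directly, following the method of \cite{VietTuanLucas}. Concretely, one writes $(A_\lambda)_*(R\otimes T)=\bigwedge_{j}\ddc(u_j\circ\pi_2)\wedge\sigma_\lambda^*T$ and likewise for $R_k$, expands the difference telescopically into a sum of terms $\ddc\big((u_i-u_{i,k})\circ\pi_2\cdot\Theta_{i,k,\lambda}\big)$ with $\Theta_{i,k,\lambda}$ closed positive currents whose masses on compact sets are bounded uniformly in $k$ and in $\lambda\geq 1$ --- using Chern--Levine--Nirenberg estimates, the uniform bound $\sup_k\|u_{j,k}\|_{L^\infty}<\infty$, and the fact that for $\lambda\geq 1$ the maps $\sigma_\lambda$ send a fixed compact set into a fixed compact set on which $T$ has finite mass, so that $\sigma_\lambda^*T$ has uniformly bounded mass. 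One then has to deduce that $(A_\lambda)_*(R\otimes T)-(A_\lambda)_*(R_k\otimes T)\to 0$ as $k\to\infty$ \emph{uniformly in} $\lambda\geq 1$; the subtle point is that the naive estimate $\int(u_{i,k}-u_i)\,d\|\Theta_{i,k,\lambda}\|\to 0$ by monotone convergence only holds for each fixed $\lambda$, so real uniformity must be built in, e.g. by a two-parameter argument exploiting that the $\Theta_{i,k,\lambda}$ themselves converge as $\lambda\to\infty$, or by convergence in capacity of $u_{i,k}$ to $u_i$ tested against uniformly bounded masses, exactly as in \cite{VietTuanLucas}. Granting this, any tangent current $S=\lim_{j}(A_{\lambda_j})_*(R\otimes T)$ satisfies $S=\pi_2^*(R_k\wedge T)+o(1)$ over $B$ as $k\to\infty$, hence $S=\pi_2^*(R\wedge T)$ over $B$ by Bedford--Taylor continuity; letting $B$ range over the covering identifies $S$ with $\pi_2^*(R\wedge T)$ on all of $\mathbb{E}_2$, hence on $\overline{\mathbb{E}}_2$ after trivial extension, which gives both the uniqueness and the formula. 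Alternatively, one could try to deduce the theorem from Theorem~\ref{th-density11agreewithclassical} by an iterated-tangent-current argument along $\Delta_{m+1}\subset\Delta_m\times X\subset X^{m+1}$, but making the required transitivity of tangent currents rigorous seems no easier than the estimate above.
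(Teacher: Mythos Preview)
Your overall route is plausible but differs from the paper's, and the step you flag as ``the crux'' and then explicitly ``grant'' is a genuine gap rather than a routine citation.

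The paper does not regularise the potentials. It argues by induction on $m$, the base case $m=1$ being Theorem~\ref{th-density11agreewithclassical}. For the inductive step it proves the stronger \emph{Claim} that the tangent current along $\Delta$ of the (non-closed) positive current $(u_1\,\ddc u_2\wedge\cdots\wedge\ddc u_m)\otimes T$ equals $\pi^*(u_1\,\ddc u_2\wedge\cdots\wedge\ddc u_m\wedge T)$; taking $\ddc$ in the first variable then gives the theorem. The induction hypothesis identifies the tangent current of $(\ddc u_2\wedge\cdots\wedge\ddc u_m)\otimes T$, so the only new difficulty is passing the bounded psh multiplier $u_1$ through the dilation limit. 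This is handled by quasi-continuity \emph{uniform in $\lambda$}: one blows up $X^2$ along $\Delta$ so that Lemma~\ref{lemma bound *norm} bounds $A_\lambda^*\Phi$ by $(\widehat\omega+\ddc\varphi_\lambda)\wedge\widehat\omega^{\,\bullet}$ with $\varphi_\lambda\searrow\varphi$, and then the strong quasi-continuity Lemma~\ref{lemma strong quasi-continuity}, applied to the family $(\ddc\varphi_\lambda+\widehat\omega)\wedge\widehat R$, produces for each $\varepsilon>0$ an open set of relative capacity $<\varepsilon$ \emph{simultaneously for all $\lambda$} outside which $\widehat u_1$ is continuous. Replacing $\widehat u_1$ by a continuous extension costs $O(\varepsilon)$ uniformly in $\lambda$, and continuous multipliers pass through the limit by the induction hypothesis.

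Your uniformity step fails as stated because ``convergence in capacity of $u_{i,k}$ to $u_i$ tested against uniformly bounded masses'' is not enough: the currents $\Theta_{i,k,\lambda}$ contain the factor $\sigma_\lambda^*T$, and a set of small Bedford--Taylor capacity need not carry small mass for such currents. What is needed is smallness of the \emph{relative} capacity $\capa_{\sigma_\lambda^*T}$, uniformly in $\lambda$, and this is exactly what Lemmas~\ref{lemma bound *norm}--\ref{lemma strong quasi-continuity} are designed to supply. With those lemmas your regularisation scheme could very likely be completed, but the bare reference to \cite{VietTuanLucas} does not close the gap: that paper treats the $(m{+}1)$-fold density along $\Delta_{m+1}\subset X^{m+1}$, whereas here one works with the $2$-fold density of the already-formed $(m,m)$-current $R$ against $T$ along $\Delta_2\subset X^2$, which is a different computation.
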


We emphasize a subtle difference with Theorem \ref{th-density11agreewithclassical}: the present statement concerns the wedge product of the $(m,m)$-current $(T_1\wedge \cdots\wedge T_m)$ with $T$ rather than the simultaneous product of the $(1,1)$-currents $T_1,\ldots,T_m$ with $T$. The compactness of $X$ is indeed not necessary. For simplicity, we state the result for compact manifolds.

%In this paper we use Theorem \ref{theorem density for bounded potential mix version} for $T$ of bi-degree $(1,1)$. In this case the proof is much easier and follows from results in \cite{VietTuanLucas}. We think, however, that the general case for $(p,p)$-currents is of independent interest and therefore deserves to be treated. 

We will use ideas from \cite{Viet-density-nonpluripolar} to prove Theorem \ref{theorem density for bounded potential mix version}. We will need  the following auxiliary lemmas.

Let $Y$ be a compact complex manifold of dimension $N$ and $V$ be a smooth hypersurface on $Y$. Let $\pi : E\to V$ be the normal bundle of $V$ in $Y$. Let $\omega$ be a Hermitian form on $Y$. Let $\varphi$ be a potential of $[V]$, then $\ddc \varphi -[V]$ is a smooth form.   Observe that there exists a constant $c_0>0$ big enough such that $\varphi$ is  $c_0 \omega$-psh on $Y$.

Consider a local chart $U$ of $Y$ which is identified with the polydisc $\D^N$ such that $V\cap U$ is equal to $\D^{N-1} \times \{0\}$. We will use in this polydisc the standard coordinates $z= (z_1,\ldots,z_N)$. Denote by $A_\lambda$ the map $(z_1,\ldots,z_N) \mapsto (z_1,\ldots,z_{N-1},\lambda z_N)$ where $\lambda \in \C^*$. Thus for a form $\Phi$ with support on $\D^N$, we see that  $A_\lambda^*(\Phi)$ has support on $\D^{N-1} \times |\lambda|^{-1} \D$.

We have the following lemma, which is a direct consequence of \cite[Lemma 2.11]{Dinh_Sibony_density} (see also \cite[Lemma 2.6]{Viet-density-nonpluripolar}).

\begin{lemma}\label{lemma bound *norm}
    Let $K$ be a compact subset of $\D^N$ and $p \in \Z_+$. Let $\Phi$ be a positive smooth $(p,p)$-form on $\D^N$. Then there exists a sequence of quasi-psh functions $\varphi_\lambda$ such that $\supp(\varphi_\lambda) \Subset \supp (\varphi)$, $\varphi_\lambda$ decreases to $\varphi$, and $A_\lambda^*(\Phi) \le C (\omega + dd^c (\varphi_\lambda))\wedge \omega^{p-1}$  on $ K \cap (\D^{N-1} \times |\lambda|^{-1} \D)$ where $C$ is a constant that does not depend on $\lambda$.
\end{lemma}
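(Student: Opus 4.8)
The plan is to localize near $V$ in the chart $\D^N$ where $[V]=\{z_N=0\}$, to take $\varphi_\lambda$ to be the standard $\log$-regularization of $\varphi$, and then to reduce the estimate to an elementary comparison of $(1,1)$-forms on the thin tube $\{|z_N|<|\lambda|^{-1}\}$. First I would set things up. Writing $\varphi=\log|z_N|+h$ near $V$ with $h$ smooth — so that $\beta:=\ddc\varphi-[V]$, which equals $\ddc h$ there, is a smooth form — and replacing $\omega$ by a large enough Hermitian form on $\D^N$ if necessary so that $\beta\ge-\tfrac12\omega$ on $K$, I would let $\varphi_\lambda$ be $\varphi$ with the term $\log|z_N|$ replaced by $\tfrac12\log(|z_N|^2+|\lambda|^{-2})$ and patched to $\varphi$ away from a fixed neighbourhood of $V$ by a cut-off that is $\equiv 1$ near $V$ (one may take $\varphi$ compactly supported in $\D^N$, since only its behaviour near $V$ is prescribed). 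This is precisely the regularization of \cite[Lemma 2.11]{Dinh_Sibony_density} (see also \cite[Lemma 2.6]{Viet-density-nonpluripolar}), which guarantees that $\varphi_\lambda$ is quasi-psh with a bound uniform in $\lambda$, that it decreases to $\varphi$ as $|\lambda|\to\infty$, and that $\supp(\varphi_\lambda)\Subset\supp(\varphi)$. The feature I would isolate is that, since $\tfrac12\log(|z_N|^2+|\lambda|^{-2})$ depends only on $z_N$, near $V$ one has $\ddc\varphi_\lambda=g_\lambda(z_N)\,i\,dz_N\wedge d\bar z_N+\beta$ with $g_\lambda\ge 0$ and $g_\lambda\gtrsim|\lambda|^2$ on $\{|z_N|<|\lambda|^{-1}\}$, hence on $\D^{N-1}\times|\lambda|^{-1}\D$ the lower bound
\[
\omega+\ddc\varphi_\lambda\;\gtrsim\;\omega+|\lambda|^2\,i\,dz_N\wedge d\bar z_N
\]
holds with constant independent of $\lambda$.

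Next I would bound the left-hand side. Since $\Phi$ is smooth and compactly supported in $\D^N$, it satisfies $\Phi\le C\,\omega^p$ on $\D^N$ for some $C$ (recall $\omega^p$ lies in the interior of the cone of positive $(p,p)$-forms), and since pull-back preserves positivity, $A_\lambda^*\Phi\le C\,(A_\lambda^*\omega)^p$. Writing $\omega=\omega'+i\,dz_N\wedge d\bar z_N$ with $\omega'=i\sum_{j<N}dz_j\wedge d\bar z_j$ gives $A_\lambda^*\omega=\omega'+|\lambda|^2\,i\,dz_N\wedge d\bar z_N$, and because $(i\,dz_N\wedge d\bar z_N)^2=0$ the binomial expansion has only two surviving terms:
\[
(A_\lambda^*\omega)^p=(\omega')^p+p\,|\lambda|^2\,(\omega')^{p-1}\wedge i\,dz_N\wedge d\bar z_N\;\le\;\big(\omega+p\,|\lambda|^2\,i\,dz_N\wedge d\bar z_N\big)\wedge\omega^{p-1},
\]
where the last inequality uses $(\omega')^p\le\omega^p$ and the identity $(\omega')^{p-1}\wedge i\,dz_N\wedge d\bar z_N=\omega^{p-1}\wedge i\,dz_N\wedge d\bar z_N$ (again from $(i\,dz_N\wedge d\bar z_N)^2=0$).

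Finally I would combine the two estimates, obtaining on $K\cap(\D^{N-1}\times|\lambda|^{-1}\D)$
\[
A_\lambda^*\Phi\;\lesssim\;\big(\omega+|\lambda|^2\,i\,dz_N\wedge d\bar z_N\big)\wedge\omega^{p-1}\;\lesssim\;(\omega+\ddc\varphi_\lambda)\wedge\omega^{p-1},
\]
the last step being obtained by wedging the $(1,1)$-form estimate of the first paragraph with the strongly positive form $\omega^{p-1}$, which preserves inequalities between positive $(1,1)$-forms; all constants are independent of $\lambda$. This is the asserted bound. The hard part, as is typical in this circle of ideas, is the construction of $\varphi_\lambda$ reconciling the uniform quasi-psh bound, the monotone convergence to $\varphi$ and the support condition, which I would borrow verbatim from \cite[Lemma 2.11]{Dinh_Sibony_density}; what remains — and what I would actually carry out — is the form-level bookkeeping above, matching the $|\lambda|^2$-blow-up of $A_\lambda^*\Phi$ in the $dz_N\wedge d\bar z_N$-direction with that of $\ddc\varphi_\lambda$ and absorbing the bounded term $\beta$ into $\omega$.
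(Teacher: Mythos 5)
Your proof is correct and is essentially the expected argument: the paper itself gives no proof but simply cites \cite[Lemma 2.11]{Dinh_Sibony_density} and \cite[Lemma 2.6]{Viet-density-nonpluripolar}, and your explicit construction of $\varphi_\lambda$ via the regularization $\tfrac12\log(|z_N|^2+|\lambda|^{-2})$, together with the binomial bookkeeping that reduces the $(p,p)$-estimate $A_\lambda^*\Phi \lesssim (\omega + |\lambda|^2\, i\,dz_N\wedge d\bar z_N)\wedge\omega^{p-1}$ to the $(1,1)$-estimate $\omega + \ddc\varphi_\lambda \gtrsim \omega + |\lambda|^2\, i\,dz_N\wedge d\bar z_N$ on the tube, is precisely what those references supply. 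The only minor imprecision is the phrase ``compactly supported'' applied to $\Phi$, which is not assumed; but since the estimate is only required on the compact $K$, the bound $\Phi\le C\omega^p$ there is all you use, so nothing is affected.
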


Let $U$ be an open subset in $\C^N$, and $R$ be a closed positive current of bi-dimension $(m,m)$ on $U$. Given a Borel subset $K$ of $U$. We define the relative capacity of $K$ with respect to the current $R$ in $U$ by
\[\capa_R(K,U):= \sup \Big\{ \int_K (dd^c u)^m \wedge R: u \text{ is psh on }U \text{ and }0\leq u\leq 1\Big\}.\]

Recall that a set $A$ is called \textit{locally complete pluripolar} if locally $A = \{u = -\infty\}$ for some psh function $u$. We have the following lemma (see \cite[Lemma 2.1]{Viet-generalized-nonpluri}).

\begin{lemma} \label{lemma capT = 0 on pluripolar set}
 Suppose that $R$ puts no mass on a locally complete pluripolar set $A$. Then we have $\capa_R(A,U) = 0$.
\end{lemma}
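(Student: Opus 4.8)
\emph{Proof plan.} The plan is to unravel the definition and reduce everything to a single Monge--Amp\`ere estimate. By definition $\capa_R(A,U)$ is the supremum, over all $u\in\PSH(U)$ with $0\le u\le 1$, of the mass $\mu_u(A)$, where $\mu_u:=(\ddc u)^m\wedge R$ is a positive measure by Bedford--Taylor theory (as $u$ is bounded); hence it suffices to show that $\mu_u(A)=0$ for each such $u$. The assertion is local, so I would assume $U$ is a ball; and since the hypothesis ``$R$ puts no mass on $A$'' will be exploited through a plurisubharmonic potential, I would also assume $A\subseteq\{v=-\infty\}$ for some $v\in\PSH(U)$ with $v<0$ and with $R$ putting no mass on $\{v=-\infty\}$. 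The first point to record is that this last condition forces $v\in L^1_{\mathrm{loc}}(\|R\|)$, where $\|R\|$ denotes the trace measure of $R$ --- this is the classical local integrability of a plurisubharmonic function against the trace measure of a closed positive current that does not charge its pole set.

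I would then carry out the Bedford--Taylor truncation. Fix $U'\Subset U$ with $A\subseteq U'$ and set $v_k:=\max(v,-k)\in\PSH(U)\cap L^\infty$, so that $0\le -v_k\uparrow -v$ pointwise and $-v_k\equiv k$ on $\{v\le -k\}\supseteq A$. Using $\mathbf 1_{\{v\le -k\}}\le\tfrac1k(-v_k)$ one gets, for every $k$,
\[
\mu_u(A)\ \le\ \mu_u\big(\{v\le -k\}\cap U'\big)\ \le\ \frac1k\int_{U'}(-v_k)\,(\ddc u)^m\wedge R\ \le\ \frac1k\int_{U'}(-v)\,(\ddc u)^m\wedge R .
\]
A Chern--Levine--Nirenberg type inequality --- applied to the bounded function $u$, the closed positive current $R$, and the nonnegative weight $-v$ --- bounds the last integral by $C_{U',U}\,\|u\|_{L^\infty}^m\int_U(-v)\,\|R\|$, which is finite by the previous step and, crucially, does not depend on $k$. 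Letting $k\to\infty$ gives $\mu_u(A)=0$, and taking the supremum over $u$ yields $\capa_R(A,U)=0$.

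The hard part is the Chern--Levine--Nirenberg step together with the integrability $v\in L^1_{\mathrm{loc}}(\|R\|)$: this is exactly where the hypothesis that $R$ puts no mass on $A$ enters, by preventing $R$ --- and hence the derived measure $(\ddc u)^m\wedge R$ --- from concentrating on the polar set $\{v=-\infty\}$ containing $A$. Equivalently, one could bypass the truncation altogether by invoking the known fact that the Monge--Amp\`ere measure $(\ddc u)^m\wedge R$ of a bounded plurisubharmonic $u$ charges no pluripolar set as soon as $R$ charges none; the estimate above is merely a self-contained proof of what is needed here. Everything else --- the reduction to a ball and the monotone convergence --- is routine.
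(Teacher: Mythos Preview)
The paper does not give its own proof of this lemma; it simply quotes \cite[Lemma~2.1]{Viet-generalized-nonpluri}. So there is no argument in the paper to compare against, and the only question is whether your proposal stands on its own.

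It does not, and the gap is at the step you flag as ``the first point to record'': the assertion that if $R$ puts no mass on $\{v=-\infty\}$ then $v\in L^1_{\mathrm{loc}}(\|R\|)$. This is false in general. On $\mathbb D^2$ take
\[
R \;=\; \ddc\!\Big(\sum_{k\ge 2}\tfrac{1}{k\log^2 k}\,\log|z_1-1/k|\Big)\;=\;\sum_{k\ge 2}\tfrac{1}{k\log^2 k}\,[\,z_1=1/k\,],
\qquad v(z)=\log|z_1|.
\]
The potential is a genuine psh function (the series converges since $\sum a_k<\infty$), the current $R$ is closed positive and supported on $\bigcup_k\{z_1=1/k\}$, hence puts no mass on $\{v=-\infty\}=\{z_1=0\}$; yet
\[
\int_{\mathbb D^2}(-v)\,d\|R\|\;\asymp\;\sum_{k\ge 2}\frac{\log k}{k\log^2 k}\;=\;\sum_{k\ge 2}\frac{1}{k\log k}\;=\;\infty.
\]
So your weighted Chern--Levine--Nirenberg bound $\int_{U'}(-v)(\ddc u)^m\wedge R\le C\int_U(-v)\,\|R\|$ gives no information, and the chain of inequalities collapses at the last step. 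The ``classical local integrability'' you invoke is true when $R$ is smooth (then $\|R\|$ is comparable to Lebesgue and $v\in L^1_{\mathrm{loc}}$ is automatic), which is exactly why the textbook Bedford--Taylor argument works for $R\equiv 1$; it is not true for general closed positive $R$.

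Your fallback sentence --- that $(\ddc u)^m\wedge R$ charges no pluripolar set as soon as $R$ charges none --- is precisely the content of the lemma, so citing it as ``known'' is circular. A correct argument needs either a more careful integration-by-parts with the \emph{bounded} weights $w_k:=\max(v/k,-1)$ (tracking that the boundary terms and the energy $\int dw_k\wedge d^c w_k\wedge R$ tend to $0$ using only that $\|R\|(A)=0$, not that $(-v)\in L^1(\|R\|)$), or a different choice of potential for $A$ adapted to $R$; in either case it is substantially more than what you wrote.
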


We also need the following quasi-continuity type lemma, which allows us to deal with a family of currents (see \cite[Theorem 2.4]{Viet-generalized-nonpluri}).

\begin{lemma}\label{lemma strong quasi-continuity}
Let $S$ be a closed positive current and $u$ be a bounded psh function on $U$. Let $v_k,v$ be psh functions such that $v_k \geq v$ for every $k$ and $v_k $ converges to $v$ in $L^1_{\mathrm{loc}}$ as $k \to \infty$. Suppose that $v_k,v$ are locally integrable with respect to current $S$ and consider the family of currents $R_k:= dd^c v_k \wedge S$. Then, for every $\varepsilon > 0$, there exists an open subset $U_1 $ of $U$ such that $u$ is continuous on $U \setminus U_1$ and $\capa_{R_k} (U_1,U) < \varepsilon$ for every $k$.
\end{lemma}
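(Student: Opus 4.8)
\textbf{Proof plan for Lemma~\ref{lemma strong quasi-continuity}.}
The plan is to reduce the claim to the classical quasi-continuity of psh functions with respect to the capacity associated to a \emph{single} fixed current, and then to uniformize over the family $R_k = dd^c v_k \wedge S$ by controlling all these currents by one majorizing current plus an error that is uniformly small in capacity. First I would recall the basic quasi-continuity statement: for a closed positive current $Q$ on $U$ and a psh function $u$ on $U$, for every $\varepsilon>0$ there is an open set $W\subset U$ with $\capa_Q(W,U)<\varepsilon$ such that $u|_{U\setminus W}$ is continuous. The issue is that the set $W$ depends on $Q$, so applying this naively to each $R_k$ would produce sets $W_k$ with no uniform control; the point of the lemma is that one single $U_1$ works for all $k$ simultaneously.

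The key structural input is the monotonicity $v_k\ge v$ together with $v_k\to v$ in $L^1_{\loc}$. Since the $v_k$ are quasi-psh with a uniform lower bound on a fixed compact and decrease (after passing to the relevant subfamily / using Hartogs-type arguments) toward $v$, the masses $\|dd^c v_k\wedge S\|$ are locally uniformly bounded; moreover one expects $dd^c v_k\wedge S \le dd^c(\sup_j v_j)^* \wedge S$ is not literally available, so instead I would use the comparison principle / partial integration estimate: for $0\le u\le 1$ psh,
\begin{align*}
\int_W (dd^c u)^p\wedge dd^c v_k\wedge S' \lesssim \int_W (dd^c u)^p\wedge dd^c v\wedge S' + \big(\text{terms involving } \|v_k-v\|_{L^1}\big),
\end{align*}
the error being controllable because $v_k-v\ge 0$ is small in $L^1_{\loc}$ and $S,S'$ have bounded mass. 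Concretely, writing $w_k:=v_k-v\ge 0$, one has $dd^c v_k\wedge S = dd^c v\wedge S + dd^c w_k\wedge S$, and an integration by parts against $(dd^c u)^{p-1}\wedge \chi S$ for a cutoff $\chi$ transfers two derivatives off $w_k$ onto bounded quantities, leaving $\int w_k\,(\text{bounded measure})\to 0$ uniformly on compacts. This yields $\capa_{R_k}(W,U)\le \capa_{dd^c v\wedge S}(W,U) + \delta_k$ with $\delta_k\to 0$; but to get a bound uniform in $k$ (not just eventually small) I would instead bound $\capa_{R_k}(W,U)\le C\big(\capa_{dd^c v\wedge S}(W,U) + \capa_{\omega\wedge S}(W,U)\big) + o(1)$ using that each $w_k$ is dominated, modulo the $L^1$-error, by the single current $dd^c v\wedge S$ plus a fixed smooth multiple of $\omega\wedge S$ — the uniform mass bound on the $w_k$ is what makes the constant $C$ independent of $k$.

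With that reduction in hand the endgame is routine: apply ordinary quasi-continuity to $u$ with respect to the \emph{two} fixed currents $dd^c v\wedge S$ and $\omega\wedge S$ to get an open $U_1$ with $u$ continuous on $U\setminus U_1$ and both $\capa_{dd^c v\wedge S}(U_1,U)$ and $\capa_{\omega\wedge S}(U_1,U)$ less than $\varepsilon/(2C)$; shrinking $U_1$ a bit more (or absorbing the finitely many bad indices $k$ for which $\delta_k$ is not yet small into a separate argument, handled by applying quasi-continuity to those $R_k$ directly and intersecting) then gives $\capa_{R_k}(U_1,U)<\varepsilon$ for \emph{all} $k$. Here Lemma~\ref{lemma capT = 0 on pluripolar set} is the mechanism that lets pluripolar pieces be discarded freely: any set where $v$ or $v_k$ is $-\infty$ carries no mass of $S$-type currents and hence contributes zero capacity, so the integrability hypotheses cause no boundary terms to blow up.

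\textbf{Main obstacle.} The delicate point is obtaining the \emph{uniform}-in-$k$ capacity bound rather than a merely asymptotic one: one must show that the error produced by replacing $v_k$ with $v$ in an integration-by-parts estimate is controlled by a fixed capacity (that of $dd^c v\wedge S$ together with $\omega\wedge S$) with a constant independent of $k$, which forces careful use of the uniform local mass bounds on $dd^c v_k\wedge S$ and of the sign $v_k\ge v$; mishandling the cutoff in the integration by parts, or the unboundedness of $v$, is where the argument could fail, and this is exactly where the hypotheses $v_k\ge v$ and $v_k\to v$ in $L^1_{\loc}$ (as opposed to mere pointwise convergence) are essential.
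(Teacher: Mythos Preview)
The paper does not prove this lemma: it is quoted directly from \cite[Theorem~2.4]{Viet-generalized-nonpluri} and no argument is given, so there is no in-paper proof to compare your proposal against.

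Judged on its own, your overall strategy --- reduce to quasi-continuity with respect to a single fixed current and then uniformize in $k$ --- is the right one, but the key step does not work as you describe it. After one integration by parts moving $dd^c$ off $w_k=v_k-v$ against a cutoff $\chi$, what remains is $\int w_k\, dd^c\chi\wedge(dd^c\psi)^m\wedge S$, and the measure against which you integrate $w_k$ still carries the full factor $(dd^c\psi)^m$ coming from the \emph{test function} $\psi$ in the definition of $\capa_{R_k}$. Bounding $|dd^c\chi|\le C\omega$ controls only the total mass of this measure (Chern--Levine--Nirenberg), not the integral of the unbounded function $w_k$ against it; so your assertion that one is left with ``$\int w_k\,(\text{bounded measure})$'' with a measure independent of $\psi$ is not justified. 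The mechanism actually used in the cited reference (and in Bedford--Taylor's original quasi-continuity proof) is an \emph{iterated} Cauchy--Schwarz / integration-by-parts scheme that removes the factors $dd^c\psi$ one at a time, ultimately reducing to an integral like $\int w_k\,\omega^{m+1}\wedge S$ with no $\psi$-dependence; only at that stage can one invoke $L^1$-smallness of $w_k$. Your single-step IBP skips this reduction, and without it the inequality $\capa_{R_k}(W)\le C\,\capa_{dd^c v\wedge S}(W)+C\,\capa_{\omega\wedge S}(W)+o(1)$ is unproved.

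A secondary issue: once reduced to $\int w_k\,d\mu$ for a fixed measure $\mu$ built from $S$ and $\omega$, you still need $\int w_k\,d\mu\to 0$, whereas the hypothesis gives $v_k\to v$ only in $L^1_{\loc}$ for Lebesgue measure. Closing this gap uses $v_k\ge v$, the local integrability of $v$ with respect to $S$, and dominated convergence; you gesture at Lemma~\ref{lemma capT = 0 on pluripolar set} for this but the argument is not spelled out.
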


\begin{proof}[Proof of Theorem \ref{theorem density for bounded potential mix version}]

We prove by induction on $m$. The case $m=1$ follows from \break Theorem \ref{th-density11agreewithclassical}. Suppose that the theorem is true for $m-1$ with $m>1$. Since this is a local problem, we will work on a local chart $U$. We assume that $T_j = dd^c u_j$ on $U$ where $u_j$ is a non-negative psh function. Consider the positive current $u_1 dd^c u_2 \wedge \cdots \wedge dd^c u_m \otimes T$ on $X^2$. Let $\Delta$ be the diagonal of $X^2$ and $\pi: E \to \Delta$ be the normal bundle of $\Delta$ in $X^2$.
   \vspace{0.5cm}

\noindent        
\textbf{Claim.} The tangent current of $u_1 dd^c u_2 \wedge \cdots \wedge dd^c u_m \otimes T$ along the diagonal $\Delta$ is 
\[\pi^* (u_1 dd^c u_2 \wedge \cdots \wedge dd^c u_m \wedge T).\] % where $\pi: E \to \Delta_2$ is the projection from the normal bundle $E$ of $\Delta_2$.

Indeed, we have to show that
    \begin{equation}\label{eq density uddc u}\lim_{\lambda \to \infty} (A_\lambda)_* \tau_* (u_1 dd^c u_2 \wedge \cdots \wedge dd^c u_m \otimes T) = \pi^* (u_1 dd^c u_2 \wedge \cdots \wedge dd^c u_m \wedge T )\end{equation}
    for every holomorphic admissible map $\tau$. We note that by induction assumption, we have
    \begin{equation}\label{eq density ddc u wedge}\lim_{\lambda \to \infty} (A_\lambda)_* \tau_* (dd^c u_2 \wedge \cdots \wedge dd^c u_m \otimes T) = \pi^* (dd^c u_2 \wedge \cdots \wedge dd^c u_m \wedge T ).\end{equation}
    It is clear that when $u_1$ is continuous, \eqref{eq density uddc u} follows from \eqref{eq density ddc u wedge}.

Let $Y:= X^2$ and $\sigma : \widehat{Y} \to Y$ be the blow-up of $Y$ along $V:= \Delta$. This step allows us to consider the hypersurface situation as in Lemma~\ref{lemma bound *norm}.
    Let $\widehat{V}$ be the exceptional divisor. Let $\widehat{E}$ be the blow-up of $E$ along $V$. We can identify $\widehat{E}$ with the normal bundle of $\widehat{V}$ in $\widehat{Y}$. Let $\pi_{\widehat{E}} : \widehat{E} \to \widehat{V}$ be the natural projection. We pick a K\"ahler form $\widehat \omega$ on $\widehat{Y}$.

Let $p_1,p_2$ be the projections of $X^2$ to the first and second component respectively. Since $p_1 \circ \sigma,p_2 \circ \sigma$ are submersions, we can define
    \[\widehat{u}_j:= (p_1 \circ \sigma)^* u_j, \quad \widehat{T}:= (p_2 \circ \sigma)^* T.\]
    It is clear that $\widehat{u}_j$ is bounded for $1\leq j\leq m$.    Put 
    $$R:= (T_2 \wedge \cdots \wedge T_m) \otimes T,\quad Q:= T_2 \wedge \cdots \wedge T_m \wedge T.$$
     By induction hypothesis, $\pi^* Q$ is the unique tangent current of $R$ along $V$. Let $\widehat{R}$ be the strict transform of $R$ by $\sigma$. By \cite[Lemma 4.7]{Dinh_Sibony_density}, the unique tangent current of $\widehat{R}$ along $\widehat{V}$ is $\pi^*_{\widehat{E}} (\sigma|_{\widehat{V}})^* Q$ (we note here that \cite[Lemma 4.7]{Dinh_Sibony_density} was stated for closed positive currents but its proof works actually for  positive currents).  Thus, for every admissible map $\widehat{\tau}$ from an open neighborhood of $\widehat{V}$ in $\widehat{Y}$ to $\widehat{E}$ and for every continuous function $f$ on $\widehat{Y}$, we have
    \begin{equation}\label{eq lim a lamda f R}
    \lim_{\lambda \to \infty} (A_\lambda)_* (\widehat{\tau})_* (f\widehat{R}) = \pi^*_{\widehat{E}} ((f|_{\widehat{V}})(\sigma|_{\widehat{V}})^* Q).
    \end{equation}
If $u_1$ is continuous, then the desired assertion follows immediately from (\ref{eq lim a lamda f R}) by applying it to $f= \widehat {u}_1$. The main difficulty, as in the proof of the main result in \cite{Viet-density-nonpluripolar}, is to reduce the general situation to the case where $u_1$ is continuous.

 Let $\varphi$ and $\varphi_{\lambda}$ be quasi-psh functions associated with $\widehat{V}$ as in Lemma~\ref{lemma bound *norm}. Since the fibers of $p_2\circ \sigma$ intersect $\widehat{V}$ properly, we have $\varphi$ is locally integrable with respect to $\widehat{T}$. This also implies that $\widehat{T}$ puts no mass on $\widehat{V}$.

    By Lemma~\ref{lemma capT = 0 on pluripolar set}, we have $dd^c \widehat{u}_2 \wedge \cdots \wedge dd^c \widehat{u}_m \wedge \widehat{T}$ puts no mass on $\widehat{V}$. Since $\widehat{R}$ also puts no mass on $\widehat{V}$ and $\sigma$ is an isomorphism on $\widehat{Y}\setminus \widehat{V}$, we have \begin{equation}\label{eq ddc u wedge T = R blow up}
        dd^c \widehat{u}_2 \wedge \cdots \wedge dd^c \widehat{u}_m \wedge \widehat{T} = \widehat{R}.
    \end{equation}
    Since $\varphi$ is locally integrable with respect to $\widehat{R}$, \eqref{eq ddc u wedge T = R blow up} implies that $\varphi_\lambda$ is also locally integrable with respect to $\widehat{R}$. Therefore, we can apply Lemma~\ref{lemma strong quasi-continuity} for the family of currents $(dd^c \varphi_{\lambda} + \widehat{\omega}) \wedge \widehat{R}$.

    Consider a local chart $\widehat{U}$ of $\widehat{Y}$ small enough such that $\widehat{E}$ is trivializable on $\widehat{V} \cap \widehat{U}$. Then we can consider the identity map as the local holomorphic admissible map. Let $\Phi$ be a smooth positive form of bi-degree $(2n-m-p+1,2n-m+1)$. By Lemma~\ref{lemma bound *norm}, we can bound
    \begin{equation}\label{eq bound A^* lambda Phi}
        A_\lambda^* \Phi \lesssim(dd^c \varphi_{\lambda} + \widehat{\omega}) \wedge \widehat{\omega}^{2n-m-p}.
    \end{equation}
    Using \eqref{eq bound A^* lambda Phi} and applying Lemma~\ref{lemma strong quasi-continuity} for $\widehat{u_1}$, we deduce that, for any $\varepsilon > 0$, there exists an open subset $\widehat{U}_1$ of $\widehat{U}$ such that
    \begin{equation}\label{eq strong quasi-continuity}\capa_{A_\lambda^* \Phi \wedge \widehat{R}} (\widehat{U}_1,\widehat{U}) < \varepsilon
    \end{equation}
    for every $\lambda$ and $\widehat{u}_1$ is continuous on $\widehat{U}\setminus \widehat{U}_1$. Moreover, we can pick an open subset $W$ of $X \simeq V$ such that 
    \begin{equation}\label{eq quasi-continuity for T}
        \capa_T(W) < \varepsilon
    \end{equation}
    and $u_1$ is continuous on $X \setminus W$. Put $B:= (\widehat{U} \setminus \widehat{U}_1) \cup (p_1 \circ \sigma)^{-1}(X\setminus W)$. Then $\widehat{u}_1$ is continuous on $B$.

    Pick a continuous function $\widehat{u}'_1$ such that $\|\widehat{u}'_1\|_{L^\infty} \leq \|u_1\|_{L^\infty}$ and $\widehat{u}'_1 = \widehat{u_1}$ on
    $B \cap \widehat{U}$. By \eqref{eq strong quasi-continuity}, we have
    \begin{equation}\label{eq bound mass}\|(A_\lambda)_* ( \widehat{u_1} \widehat{R}) - (A_\lambda)_*(\widehat{u}'_1\widehat{R})\| \lesssim \varepsilon,\end{equation}
    uniformly in $\lambda$, where the mass is taken on a compact subset of $\widehat{U}$. Moreover, by \eqref{eq quasi-continuity for T}, we have
    \begin{equation}\label{eq bound mass 2}
        \|\pi_{\widehat{E}}^* ((\widehat{u}'_1|_{\widehat{V}} -\widehat{u}_1|_{\widehat{V}})(\sigma|_{\widehat{V}})^* Q )\| \lesssim \varepsilon,
    \end{equation}
   where the mass is taken on a compact subset of $\widehat{U}$. Combining \eqref{eq lim a lamda f R}, \eqref{eq bound mass}, and \eqref{eq bound mass 2}, we infer the claim.

    Since our problem is local, we can assume that $\tau$ is identity map in \eqref{eq density uddc u}. Taking $dd^c$ in the first variable of both sides in \eqref{eq density uddc u}, the proof is complete.
\end{proof}

The next theorem compares relative non-pluripolar product and density currents.

\begin{theorem}\label{theorem main compare Lelongphay}
Let $T_1,\ldots,T_{m}$ be closed positive $(1,1)$-currents on $X$ for $1\leq m\leq n-1$. Let $T$ be a closed positive $(1,1)$-current on $X$ and $R:=\langle T_1\wedge \cdots \wedge T_m \rangle$. Let $S$ be a density current of $T$ and $R$. Then $S$ is of minimal h-dimension, and we have 
$$\langle \wedge_{j=1}^{m} T_j \ \dot{\wedge} \ T\rangle  \le S',$$
where $S'$ is a closed positive current on $X$ such that $\pi_2^*(S') =S$.
\end{theorem}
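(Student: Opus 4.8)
I would reduce to a local statement and then play the bounded-potential comparison of Theorem~\ref{theorem density for bounded potential mix version} against truncations of the potentials. Working on a small coordinate chart $U$ I write $T_j=\ddc u_j$ with $u_j\le 0$ plurisubharmonic, and set $u_{j,k}:=\max\{u_j,-k\}$, $R_{(k)}:=\wedge_{j=1}^{m}\ddc u_{j,k}$ (a closed positive current with locally bounded potentials), $\chi_k:=\mathbf{1}_{\bigcap_j\{u_j>-k\}}$ and $G_k:=\bigcup_j\{u_j\le -k\}$, which is closed. By plurifine locality of non-pluripolar products together with Bedford--Taylor locality one has $\chi_k R=\chi_k R_{(k)}$, the currents $\chi_k R_{(k)}$ increase to $R$, the currents $\chi_k R_{(k)}\wedge T$ increase to $\langle T_1\wedge\cdots\wedge T_m\ \dot{\wedge}\ T\rangle$, and $\|(\mathbf{1}-\chi_k)R\|=\|R\|-\|\chi_k R_{(k)}\|\to 0$ on compact subsets (here implicitly $m+1\le n$, so that the displayed objects make sense).

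First I would settle the structure of $S$. Given a density current $S$ of $T$ and $R$, realize it as $(A_{\lambda_i})_*\tau_*(T\otimes R)\to S$; after passing to a subsequence and diagonalizing over $k$ I may assume $(A_{\lambda_i})_*\tau_*(T\otimes\chi_kR_{(k)})\to S_1^{(k)}$, $(A_{\lambda_i})_*\tau_*\big(T\otimes(\mathbf{1}-\chi_k)R\big)\to S_2^{(k)}$ and $(A_{\lambda_i})_*\tau_*\big(T\otimes(\mathbf{1}-\chi_k)R_{(k)}\big)\to Z_k$ for every $k$, while by Theorem~\ref{theorem density for bounded potential mix version} the closed current $T\otimes R_{(k)}$ has the unique tangent current $\pi^*(R_{(k)}\wedge T)$ along $\Delta$. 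Then $S=S_1^{(k)}+S_2^{(k)}$ and $\pi^*(R_{(k)}\wedge T)=S_1^{(k)}+Z_k$, so $0\le S_1^{(k)}\le\pi^*(R_{(k)}\wedge T)$. Since $R_{(k)}\wedge T\wedge\omega^{\,n-m}=0$ for bidegree reasons, $S_1^{(k)}\wedge\pi^*\omega^{\,n-m}=0$; and the uniform-in-$\lambda$ mass bound of \cite{Dinh_Sibony_density} for the dilated push-forwards gives $\|S_2^{(k)}\|\lesssim\|(\mathbf{1}-\chi_k)R\|\to 0$ on compacts. Hence $S\wedge\pi^*\omega^{\,n-m}=0$, that is, $S$ has minimal h-dimension, and by Lemma~\ref{lemma full cohomology minimal dim} there is a closed positive current $S'$ on $X$ with $S=\pi^*(S')$.

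Next I would prove $\langle T_1\wedge\cdots\wedge T_m\ \dot{\wedge}\ T\rangle\le S'$. The current $(\mathbf{1}-\chi_k)R_{(k)}$ is supported on the closed set $G_k$, hence so is any tangent current of $T\otimes(\mathbf{1}-\chi_k)R_{(k)}$ over $\Delta$; thus $\supp Z_k\subset\pi^{-1}(G_k)$. Combined with $Z_k\le\pi^*(R_{(k)}\wedge T)$ this yields $Z_k\le\mathbf{1}_{\pi^{-1}(G_k)}\,\pi^*(R_{(k)}\wedge T)=\pi^*\big((\mathbf{1}-\chi_k)R_{(k)}\wedge T\big)$, whence $S_1^{(k)}=\pi^*(R_{(k)}\wedge T)-Z_k\ge\pi^*(\chi_kR_{(k)}\wedge T)$. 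On the other hand $T\otimes R\ge T\otimes\chi_kR_{(k)}$, so $S\ge S_1^{(k)}\ge\pi^*(\chi_kR_{(k)}\wedge T)$; letting $k\to\infty$ and using that $\chi_kR_{(k)}\wedge T$ increases to $\langle T_1\wedge\cdots\wedge T_m\ \dot{\wedge}\ T\rangle$ gives $S\ge\pi^*\big(\langle T_1\wedge\cdots\wedge T_m\ \dot{\wedge}\ T\rangle\big)$. Since $S=\pi^*(S')$ and $\pi$ is a submersion, this is equivalent to $S'\ge\langle T_1\wedge\cdots\wedge T_m\ \dot{\wedge}\ T\rangle$, which is the assertion.

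The main difficulty I anticipate is the analytic bookkeeping for the non-closed positive currents appearing above: the uniform-in-$\lambda$ mass estimate for $(A_\lambda)_*\tau_*\big(T\otimes(\mathbf{1}-\chi_k)R\big)$ that forces $\|S_2^{(k)}\|\to 0$, and the locality over the base that confines $Z_k$ to $\pi^{-1}(G_k)$. Both should follow from the inputs underlying Theorem~\ref{theorem density for bounded potential mix version}, using that \cite[Lemma 4.7]{Dinh_Sibony_density} and the mass bounds of \cite{Dinh_Sibony_density} remain valid for positive, not necessarily closed, currents; the plurifine locality of (relative) non-pluripolar products then enters only through the elementary identities $\chi_kR=\chi_kR_{(k)}$ and $\chi_kR_{(k)}\wedge T\uparrow\langle T_1\wedge\cdots\wedge T_m\ \dot{\wedge}\ T\rangle$ recorded at the outset.
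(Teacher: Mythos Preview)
Your strategy for the main inequality is genuinely different from the paper's and, as far as I can see, correct. The paper avoids indicator functions entirely: it introduces the bounded psh weight $\psi_k:=k^{-1}\max\{\sum_j u_j\circ p_1,-k\}$, observes the algebraic identity $-\psi_k P=-(\psi_k+1)P_k+P$ (where $P=R\otimes T$, $P_k=R_{(k)}\otimes T$), and applies the Claim inside the proof of Theorem~\ref{theorem density for bounded potential mix version} to compute the tangent current of $(\psi_k+1)P_k$ exactly as $\pi^*\big((\rho_k+1)\,R_{(k)}\wedge T\big)$; positivity of the tangent current of $-\psi_k P$ then yields $\pi^*\big((\rho_k+1)\,R_{(k)}\wedge T\big)\le P_\infty$, and letting $k\to\infty$ finishes. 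Your route---splitting $T\otimes R_{(k)}$ into the pieces over $\{\chi_k=1\}$ and $G_k$, confining $Z_k$ to $\pi^{-1}(G_k)$ by the elementary support argument, and using $Z_k\le \mathbf 1_{\pi^{-1}(G_k)}\pi^*(R_{(k)}\wedge T)$---gives the same lower bound $S\ge\pi^*(\chi_k R_{(k)}\wedge T)$ without ever invoking the Claim. It is a nice alternative; the paper's $\psi_k$ trick buys a shorter argument because it never leaves the realm of (bounded psh)$\times$(closed) currents.

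There is, however, a genuine gap in your minimal h-dimension argument. You assert that the Dinh--Sibony uniform-in-$\lambda$ mass bound gives $\|S_2^{(k)}\|\lesssim\|(\mathbf 1-\chi_k)R\|$. The DS mass estimate (underlying \cite[Theorem~4.6]{Dinh_Sibony_density} and Lemma~\ref{lemma bound *norm}) bounds $A_\lambda^*\Phi$ by $(\omega+\ddc\varphi_\lambda)\wedge\omega^{p-1}$ and then \emph{integrates by parts}, using that the current is closed, to eliminate the $\ddc\varphi_\lambda$ term; for a merely positive current like $T\otimes(\mathbf 1-\chi_k)R$ this step fails, and in fact one can build simple examples where a positive current of arbitrarily small mass concentrated near the diagonal has dilated push-forwards of mass bounded away from zero. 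So $\|S_2^{(k)}\|\to 0$ is not justified. The paper sidesteps this completely: since $R=\langle T_1\wedge\cdots\wedge T_m\rangle$ charges no pluripolar set, in particular not $\{x:\nu(T,x)>0\}$, \cite[Proposition~5.7]{Dinh_Sibony_density} forces the h-dimension of any density current of $R$ and $T$ to be strictly below $n-m$, hence equal to $n-m-1$. Replacing your first paragraph's h-dimension argument by this one-line appeal closes the gap, and the rest of your proof then goes through.
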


\begin{proof}
Let the notations be as in the proof of Theorem \ref{theorem density for bounded potential mix version}.
Since this is a local problem, we will work locally on a chart $U$. On this chart, we write $T_{j}$ as $\ddc u_{j}$ where $u_j$ is a negative psh function on $U$. For $k \in \N$, we set 
    \[
        u_{j,k} :=  \max\{u_{j},-k\} \mbox{ and } T_{j,k} := \ddc u_{j,k}.
    \]
    Since $T_{j,k}$ is of bounded potential, the classical product $\wedge_{j=1}^{m} T_{j,k}$ is well-defined. We set 
    \[
        P_{k} := \wedge_{j=1}^{m} T_{jk} \otimes T \mbox{ and } P := \langle \wedge_{j=1}^{m} T_{j} \rangle \otimes T.
    \]
We also define
    \[
        \psi :=\sum_{j=1}^{m} u_{j} \circ p_1 \mbox{ and }\psi_{k}  = k^{-1} \max\{\psi, -k\}. 
    \]
   It is clear that $\psi_{k}+1=0$ on $\bigcup_{j=1}^{m} p_{1}^{-1} \{u_{j} \leq -k\}$. This implies
    \begin{align}
                   -\psi_{k}P  &= -(\psi_{k}+1) P + P \label{equ1} \\
                   &= -(\psi_{k}+1) \mathbf{1}_{\bigcap_{j=1}^{m} \{u_{j}>-k\} } \langle \wedge_{j=1}^{m} T_{j} \rangle \otimes T + P \notag \\
                   &= -(\psi_{k}+1)P_{k} + P. \notag
    \end{align}

     Let $(\lambda_l)$ be a defining sequence for the density current $S$. Let $P_{k,\infty}$, $P_{\infty}$ be tangent currents of $-\psi_{k}P$, $P$ along $\Delta \subset X^{2}$ (defined using the sequence $(\lambda_l)$) respectively. We have $P_\infty = S$. By the Claim in the proof of Theorem \ref{theorem density for bounded potential mix version}, we get that the tangent current of $(\psi_{k}+1)P_{k}$ equals $\pi^{*} ((\rho_{k}+1) \wedge_{j=1}^{m}T_{jk} \wedge T)$, where  $\rho_{k}$ is the restriction of $\psi_{k}$ on $\Delta$. Now, by taking the tangent current of both sides of \eqref{equ1}, we get 
    \begin{equation}
    \label{equ2}
        P_{k,\infty} = P_{\infty} - \pi^{*} ((\rho_{k}+1) \wedge_{j=1}^{m}T_{jk} \wedge T).
    \end{equation}
    Since $\rho_{k}+1=0$ on $\bigcup_{j=1}^{m} \{u_{j} \leq -k\}$, we obtain
    \[
        (\rho_{k}+1) \wedge_{j=1}^{m}T_{jk} \wedge T = (\rho_{k}+1) \langle \wedge_{j=1}^{m} T_{j} \ \dot{\wedge} \ T \rangle,
    \]
    which converges to $\langle \wedge_{j=1}^{m} T_{j} \ \dot{\wedge}\ T \rangle$ as $k \rightarrow \infty$. This, combined with \eqref{equ2}, implies
    \begin{equation}\label{eq compare relative with density}
        \pi^{*}(\langle \wedge_{j=1}^{m} T_{j} \ \dot{\wedge} \ T \rangle) \leq   P_{\infty}.
    \end{equation}

     Since $\langle \wedge_{j=1}^m T_j\rangle$ puts no mass on the pluripolar set $\{x:\nu(T,x)>0\}$, the h-dimension between $\langle \wedge_{j=1}^m T_j\rangle$ and $T$ is strictly smaller than $n-m$ by \cite[Proposition 5.7]{Dinh_Sibony_density}. Since the h-dimension is at least $n-m-1$ (by a bi-degree reason and the fact that $T$ is of bi-degree $(1,1)$), it must be equal to $n-m-1$. Therefore, there exists a closed positive current $P_\infty'$ on $X$ such that $P_\infty = \pi^* (P_\infty')$. This combines with \eqref{eq compare relative with density} to imply that $\langle \wedge_{j=1}^{m} T_{j} \ \dot{\wedge} \ T \rangle \leq P_\infty'$ as desired.
\end{proof}

\section{Proof of main results}

In this last section, we prove our main theorems.

\begin{proof}[Proof of Theorem \ref{theorem main compare Lelong}]
Let the notations be as in  Theorem \ref{theorem main compare Lelongphay}. The first part of Theorem \ref{theorem main compare Lelong} follows directly from 
Theorem \ref{theorem main compare Lelongphay}.

We now prove the second part. Since $T$ has no mass on point sets, by Lemma  \ref{lemma capT = 0 on pluripolar set}, the measure $\langle \wedge_{j=1}^{n-1} T_j\ \dot{\wedge}\ T \rangle$ has no mass on point-sets on $X$.   Corollary \ref{cor-sosanhsoLelongdensity} implies that
$$S' \ge  \sum_{x \in X} \nu(R,x) \nu(T,x) \delta_x,$$
which, combined with Theorem \ref{theorem main compare Lelongphay}, gives
$$S' \ge  \langle \wedge_{j=1}^{n-1}T_j \ \dot{\wedge} \ T \rangle +  \sum_{x \in X} \nu(R,x) \nu(T,x) \delta_x.$$
Therefore, if the mass of $S'$ and $\langle \wedge_{j=1}^{n-1} T_j \ \dot{\wedge}\ T \rangle$ are equal, one must have $\nu(R,x)\cdot  \nu(T,x)=0$ for every $x\in X$. The proof is complete.
\end{proof}

\begin{corollary} \label{cor-divisorpartialderivitve2} Let $X$ be a compact K\"ahler manifold of dimension $n$. Let $\alpha$ be a big cohomology class on $X$ and $D$ be a real effective divisor on $X$. Assume that 
\begin{align}\label{ine-totaldensityDTmin2}
 \langle \alpha^{n-1}\rangle|_{X|D}= \langle \alpha^{n-1}\rangle \smile \{D\}.
\end{align}
Then we have $\nu(\langle \alpha^{n-1}\rangle, x)=0$ for every $x\in \supp (D)$. In particular, if $X$ is projective, then we have 
$$\nu(\langle \alpha^{n-1}\rangle, x)=0$$
 for every $x\in X$.  
\end{corollary}

We note that we do not know if the vanishing of Lelong numbers of $\langle \alpha^{n-1}\rangle$ can imply the equality~\eqref{ine-totaldensityDTmin2}.

\begin{proof}
We recall that  by \cite[Lemma 4.5 and Proposition 4.6]{Vu_derivative}, we have
 $$\langle \alpha^{n-1}\rangle|_{X|D}=\langle T_{\min,\alpha}^{n-1} \ \dot{\wedge} \ [D] \rangle.$$
The desired first assertion now follows directly from Theorem \ref{theorem main compare Lelong} and the fact that $\nu([D],x)>0$ for every $x\in D$. 

Consider now the case where $X$ is projective. Let $x \in X$, and $D$ be a smooth hyperplane section passing through $x$.  By \cite{WittNystrom-duality}, we have (\ref{eq-BDPP}) for $\gamma= \{D\}$. This combined with \cite{WittNystrom-deform,Vu_derivative} gives
$$\langle \alpha^{n-1}\rangle|_{X|D}= \langle \alpha^{n-1} \rangle \smile \{D\}$$
because both are equal to $1/n$ times the derivative of $\volume(\alpha+ t \{D\})$ at $t=0$. The first assertion now implies $\nu(\langle \alpha^{n-1} \rangle, x)=0$. This finishes the proof.
\end{proof}

We can rephrase Corollary \ref{cor-divisorpartialderivitve2} as follows: if the equality \eqref{eq-BDPP} holds for $\gamma=\{D\}$, then $\nu(\langle \alpha^{n-1}\rangle, x)=0$ for every $x\in \supp (D)$. We also have the following variant.

\begin{theorem} \label{the-main-Zariski} Let $X$ be a compact K\"ahler manifold of dimension $n$. Let $\alpha$ be a big cohomology class on $X$. Suppose that the conjectural identity \eqref{eq-BDPP} holds for $X$.  Then the Lelong number of $\langle \alpha^{n-1} \rangle$ is zero everywhere on $X$. 
\end{theorem}

\begin{proof}
Let $x_0\in X$. %Let $T_{\min,\alpha}$ be a current of minimal singularities in $\alpha$ and 
Let $T = \omega + dd^c \varphi$ be a closed positive $(1,1)$-current cohomologous to $\omega$ such that $T$ is smooth outside $x_0$ and $\nu(T,x_0)>0$ (such a current can be constructed locally using the function $\varepsilon \log |x-x_0|$ for some constant $\varepsilon>0$ small enough). The desired assertion will follow from  Theorem~\ref{theorem main compare Lelong} if  we can show that 
\begin{align}\label{eq-cancmTminalhapro}
\{\langle T_{\min,\alpha}^{n-1}\rangle\} \smile \{T\} = \{ \langle T_{\min,\alpha}^{n-1} \ \dot{\wedge} \ T \rangle \}.
\end{align}

 By Demailly's approximation theorem (see \cite{Demailly_regula_11current}) and Hironaka's theorem, there are closed positive $(1,1)$-currents $T_k$ of analytic singularities and smooth modifications $\pi_k:X_k \to X$ such that the sequence of potentials of $T_k$ is decreasing and 
 $$T_k \to T_{\min,\alpha},\quad \pi_k^* (T_k) = \omega_k + [D_k]$$
  for some semi-positive form $\omega_k$ and effective $\mathbb{Q}$-divisor $D_k$ on $X_k$. We also have that $\nu(T_k,x) \to \nu(T_{\min,\alpha},x)$ as $k\to \infty$ for every $x \in X$. It follows that 
\begin{align} \label{conver-TmTk} 
\langle T_k^m \rangle \to \langle T_{\min,\alpha}^m \rangle, \quad \langle T_k^m \ \dot{\wedge}\ T\rangle  \to \langle T_{\min,\alpha}^m \ \dot{\wedge}\ T \rangle
  \end{align}
  weakly as $k \to \infty$ for every $1 \le m \le n-1$ (see \cite[Theorem 4.9]{Viet-generalized-nonpluri}), and $\pi_k(\supp (D_k))$ is contained in the set $\{x\in X:\nu(T_{\min,\alpha},x)>0\}$.

   Since $T$ has no mass on proper analytic subsets in $X$ (in particular $\pi_k(\supp (D_k))$) and $\pi_k$ is biholomorphic outside $\supp (D_k)$, we see that 
$$\int_{X} \langle T_k^{n-1} \ \dot{\wedge} \ T \rangle=\int_{X \setminus \pi_k(\supp (D_k))} \langle T_k^{n-1} \ \dot{\wedge} \ T \rangle= \int_{X_k \setminus  \supp (D_k)} \omega_k^{n-1} \wedge \pi_k^*(T).$$  
It follows that  
$$\int_X \langle T_{\min,\alpha}^{n-1} \ \dot{\wedge} \ T \rangle = \lim_{k\to \infty}\int_{X} \langle T_k^{n-1} \ \dot{\wedge} \ T \rangle =\lim_{k\to \infty} \int_{X_k \setminus \supp (D_k)} \omega_k^{n-1} \wedge \pi_k^*(T).$$

On the other hand, we have
\begin{align*}
\int_X \{\langle T_{\min,\alpha}^{n-1}\rangle\} \smile \{T\} &= \lim_{k\to \infty}\int_X \{\langle T_k^{n-1}\rangle\} \smile \{T\}\\
&= \lim_{k\to \infty}\int_{X_k} \pi_k^*\{ \langle T_k^{n-1}\rangle \}\smile \{\pi^*_k (T)\}\\
&= \lim_{k\to \infty} \int_{X_k} \omega_k^{n-1} \wedge \pi_k^*(T).
\end{align*}  
Hence,  the desired equality (\ref{eq-cancmTminalhapro}) is equivalent to
\begin{align}\label{eq-limksuppD_k}
\lim_{k\to \infty} \int_{\supp (D_k)} \omega_k^{n-1}\wedge  \pi_k^* (T) = 0.
\end{align}

Write $\pi_k^* (T) = R_k+ R_k'$, where $R_k$ is a closed positive $(1,1)$-current supported on $\supp (D_k)$ and the closed positive current $R'_k$ has no mass on $\supp (D_k)$. Since $T$ is smooth outside $x_0$, we see that 
\begin{equation*}
\supp (R_k) \subset \pi_k^{-1}(x_0) \cap \supp (D_k)
\end{equation*}
which is empty if $\nu(T_{\min,\alpha},x_0)=0$. 
   Observe that 
$$R_k = \sum_D \nu(R_k,D)[D] \le \sum_{D} \nu(\pi_k^*(T), D) [D],$$
where the sums run over all irreducible components $D$ in $D_k$.

 We note that 
\begin{align}\label{ine-uocluongEktichphan}
\int_{\supp (D_k)} \omega_k^{n-1}\wedge \pi_k^* (T)&= \int_{\supp (D_k)} \omega_k^{n-1}\wedge R_k\\ 
\nonumber &= \int_{\supp (R_k)} \omega_k^{n-1}\wedge R_k\\ \nonumber
&= \int_{\pi_k^{-1}(x_0)\cap \supp (D_k)} \omega_k^{n-1}\wedge R_k 
\end{align} 
which is zero if $\nu(T_{\min,\alpha},x_0)=0$ by the above arguments.

Consider now the case where $\nu(T_{\min,\alpha},x_0)>0$ and $k$ big enough such that 
$$\nu(T_k,x_0) \ge \delta:=\frac{1}{2}\nu(T_{\min,\alpha},x_0)>0.$$
  By the construction of $T$,  there is a constant $\epsilon>0$ such that $(\epsilon \delta) T$ is less singular than $T_k$. Thus, the current $(\epsilon \delta) \pi_k^*(T)$ is less singular than $\pi_k^* (T_k)$. Consequently, one gets 
\begin{align} \label{ine-RkchanDk}
R_k= \mathbf{1}_{\supp (D_k)} \pi_k^*(T) \le  (\epsilon\delta)^{-1}\mathbf{1}_{\supp (D_k)} \pi_k^*(T_k)= (\epsilon \delta)^{-1}[D_k].
\end{align}
By \eqref{eq-BDPP} and \cite[Appendix A]{WittNystrom-duality}, we know that $\volume(\alpha)= \langle \alpha^{n-1} \rangle \smile \alpha$. Pulling back both sides by $\pi_k$ and using (\ref{conver-TmTk}) give 
\begin{align*}\lim_{k\to \infty} \int_{X_k}\omega_k^{n-1} \wedge [D_k]&= \lim_{k\to \infty} \int_{X_k} \omega_k^{n-1} \wedge \pi_k^* T_k - \int_{X_k} \omega_k^{n}\\&=\langle \alpha^{n-1}\rangle \smile \alpha - \volume(\alpha) = 0,\end{align*}
which, combined with (\ref{ine-RkchanDk}), implies 
$$\lim_{k\to \infty}\int_{\supp (D_k)} \omega_k^{n-1}\wedge R_k=0.$$
This coupled with (\ref{ine-uocluongEktichphan}) and (\ref{eq-limksuppD_k}) gives the desired assertion. 
\end{proof}

\bibliography{biblio_family_MA,biblio_Viet_papers,bib-kahlerRicci-flow}

@article{DNV,
 author               = {Dinh, Tien-Cuong and Nguy\^en, Vi\^et-Anh and Vu, Duc-Viet},
 journal              = {Adv. Math.},
 pages                = {874--907},
 title                = {Super-potentials, densities of currents and number of periodic points for holomorphic maps},
 volume               = {331},
 year                 = {2018},
 }

@article{Viet-density-nonpluripolar,
 author               = {Vu, Duc-Viet},
 fjournal             = {Bulletin of the London Mathematical Society},
 journal              = {Bull. Lond. Math. Soc.},
 number               = {2},
 pages                = {548--559},
 title                = {Density currents and relative non-pluripolar products},
 volume               = {53},
 year                 = {2021},
 }

@article{Viet-generalized-nonpluri,
 author               = {Vu, Duc-Viet},
 journal              = {Ann. Global Anal. Geom.},
 pages                = {269--311},
 title                = {Relative non-pluripolar product of currents},
 volume               = {60},
 year                 = {2021},
 }

@article{Viet_Lucas,
 author               = {Kaufmann, Lucas and Vu, Duc-Viet},
 fjournal             = {Journal of Functional Analysis},
 journal              = {J. Funct. Anal.},
 number               = {2},
 pages                = {392--417},
 title                = {Density and intersection of {$(1,1)$}-currents},
 volume               = {277},
 year                 = {2019},
 }

@article{VietTuanLucas,
  title={Intersection of {$(1,1)$}-currents and the domain of definition of the {M}onge-{A}mp\`ere operator},
  author={Huynh, Dinh Tuan and Kaufmann, Lucas and Vu, Duc-Viet},
  journal={Indiana Univ. Math. J.},
  volume={72},
  number={1},
  pages={239--261},
  year={2023},
  publisher={INDIANA UNIV MATH JOURNAL SWAIN HALL EAST 222, BLOOMINGTON, IN 47405 USA}
}

@article{Vu_density-nonkahler,
  title={Densities of Currents on Non-{K}\"ahler Manifolds},
  author={Vu, Duc-Viet},
  journal={International Mathematics Research Notices},
  volume={2021},
  number={17},
  pages={13282--13304},
  year={2021},
  publisher={Oxford University Press},
  doi={10.1093/imrn/rnz270}
}

@article {Vu_lelong-bigclass,
    AUTHOR = {Vu, Duc-Viet},
     TITLE = {Lelong numbers of currents of full mass intersection},
   JOURNAL = {Amer. J. Math.},
  FJOURNAL = {American Journal of Mathematics},
    VOLUME = {145},
      YEAR = {2023},
    NUMBER = {2},
     PAGES = {647--665},
      ISSN = {0002-9327,1080-6377},
   MRCLASS = {32Q15 (58A25)},
  MRNUMBER = {4570993},
}

@article{Vu_derivative,
  author  = {Vu, Duc-Viet},
  title   = {Derivative of volumes of big cohomology classes},
  journal = {Indiana Univ. Math. J.},
  volume  = {74},
  number  = {6},
  pages   = {1531--1560},
  year    = {2025},
  doi     = {10.1512/iumj.2025.74.60525}
}

@article{SuVu-volume-lelong,
  title={Volumes of components of {L}elong upper level sets {II}},
  author={Su, Shuang and Vu, Duc-Viet},
  journal={Mathematische Annalen},
  volume={391},
  number={4},
  pages={6451--6465},
  year={2025},
  publisher={Springer}
}

@article {Tosatti-weakMorse,
    AUTHOR = {Tosatti, Valentino},
     TITLE = {The {C}alabi-{Y}au theorem and {K}\"{a}hler currents},
   JOURNAL = {Adv. Theor. Math. Phys.},
  FJOURNAL = {Advances in Theoretical and Mathematical Physics},
    VOLUME = {20},
      YEAR = {2016},
    NUMBER = {2},
     PAGES = {381--404},}

@article {Tosatti-orthogonality,
    AUTHOR = {Tosatti, Valentino},
     TITLE = {Orthogonality of divisorial {Z}ariski decompositions for
              classes with volume zero},
   JOURNAL = {Tohoku Math. J. (2)},
  FJOURNAL = {The Tohoku Mathematical Journal. Second Series},
    VOLUME = {71},
      YEAR = {2019},
    NUMBER = {1},
     PAGES = {1--8},}

@article {Xiao-weak-morse,
    AUTHOR = {Xiao, Jian},
     TITLE = {Weak transcendental holomorphic {M}orse inequalities on
              compact {K}\"{a}hler manifolds},
   JOURNAL = {Ann. Inst. Fourier (Grenoble)},
  FJOURNAL = {Universit\'{e} de Grenoble. Annales de l'Institut Fourier},
    VOLUME = {65},
      YEAR = {2015},
    NUMBER = {3},
     PAGES = {1367--1379},
      ISSN = {0373-0956,1777-5310},
   MRCLASS = {32Q15 (32C30)},
  MRNUMBER = {3449182},
MRREVIEWER = {Severin\ Barmeier},
       URL = {http://aif.cedram.org/item?id=AIF_2015__65_3_1367_0},
}

@article {Xiao-movable-inter,
    AUTHOR = {Xiao, Jian},
     TITLE = {Movable intersection and bigness criterion},
   JOURNAL = {Univ. Iagel. Acta Math.},
  FJOURNAL = {Universitatis Iagellonicae. Acta Mathematica},
    VOLUME = {55},
      YEAR = {2018},
     PAGES = {53--64},
      ISSN = {0083-4386,2084-3828},
   MRCLASS = {32J27 (32U40)},
  MRNUMBER = {4033798},
MRREVIEWER = {Dan\ Coman},
       DOI = {10.4467/20843828am.18.003.9719},
       URL = {https://doi.org/10.4467/20843828am.18.003.9719},
}

@Article{Popovici2,
 Author = {Popovici, Dan},
 Title = {Sufficient bigness criterion for differences of two nef classes},
 FJournal = {Mathematische Annalen},
 Journal = {Math. Ann.},
 ISSN = {0025-5831},
 Volume = {364},
 Number = {1-2},
 Pages = {649--655},
 Year = {2016},}

@article {Popovici,
    AUTHOR = {Popovici, Dan},
     TITLE = {Volume and self-intersection of differences of two nef
              classes},
   JOURNAL = {Ann. Sc. Norm. Super. Pisa Cl. Sci. (5)},
  FJOURNAL = {Annali della Scuola Normale Superiore di Pisa. Classe di
              Scienze. Serie V},
    VOLUME = {17},
      YEAR = {2017},
    NUMBER = {4},
     PAGES = {1255--1299},}

@article {BDPP,
    AUTHOR = {Boucksom, S\'{e}bastien and Demailly, Jean-Pierre and P\u{a}un, Mihai
              and Peternell, Thomas},
     TITLE = {The pseudo-effective cone of a compact {K}\"{a}hler manifold and
              varieties of negative {K}odaira dimension},
   JOURNAL = {J. Algebraic Geom.},
  FJOURNAL = {Journal of Algebraic Geometry},
    VOLUME = {22},
      YEAR = {2013},
    NUMBER = {2},
     PAGES = {201--248},}

@article{BEGZ,
 author               = {Boucksom, S\'{e}bastien and Eyssidieux, Philippe and Guedj, Vincent and Zeriahi, Ahmed},
 fjournal             = {Acta Mathematica},
 journal              = {Acta Math.},
 number               = {2},
 pages                = {199--262},
 title                = {Monge-{A}mp\`ere equations in big cohomology classes},
 volume               = {205},
 year                 = {2010},
 }

@article{Boucksom-derivative-volume,
 author               = {Boucksom, S\'{e}bastien and Favre, Charles and Jonsson, Mattias},
 fjournal             = {Journal of Algebraic Geometry},
 journal              = {J. Algebraic Geom.},
 number               = {2},
 pages                = {279--308},
 title                = {Differentiability of volumes of divisors and a problem of {T}eissier},
 volume               = {18},
 year                 = {2009},
 }

@article{Boucksom_anal-ENS,
 author               = {Boucksom, S\'{e}bastien},
 fjournal             = {Annales Scientifiques de l'\'{E}cole Normale Sup\'{e}rieure. Quatri\`eme S\'{e}rie},
 journal              = {Ann. Sci. \'{E}cole Norm. Sup. (4)},
 number               = {1},
 pages                = {45--76},
 title                = {Divisorial {Z}ariski decompositions on compact complex manifolds},
 volume               = {37},
 year                 = {2004},
 }

@article{BT_fine_87,
 author               = {Bedford, Eric and Taylor, B. A.},
 fjournal             = {Journal of Functional Analysis},
 journal              = {J. Funct. Anal.},
 number               = {2},
 pages                = {225--251},
 title                = {Fine topology, \v{S}ilov boundary, and {$(dd^c)^n$}},
 volume               = {72},
 year                 = {1987},
 }

@article {Collins-Tosatti-nullloci,
    AUTHOR = {Collins, Tristan C. and Tosatti, Valentino},
     TITLE = {Restricted volumes on {K}\"{a}hler manifolds},
   JOURNAL = {Ann. Fac. Sci. Toulouse Math. (6)},
  FJOURNAL = {Annales de la Facult\'{e} des Sciences de Toulouse. Math\'{e}matiques.
              S\'{e}rie 6},
    VOLUME = {31},
      YEAR = {2022},
    NUMBER = {3},
     PAGES = {907--947},}

@article{WittNystrom-deform,
author ={Witt Nystr{\"o}m, David},
title = {Deformations of {K}\"ahler manifolds to normal bundles and restricted volumes of big classes},
  journal={Journal of Differential Geometry},
  volume={128},
  number={3},
  pages={1177--1223},
  year={2024},
  publisher={Lehigh University}
}

@unpublished{Demailly_ag,
  title={Complex Analytic and Differential Geometry},
  author={Demailly, Jean-Pierre},
  note={Universit{\'e} de Grenoble I, Institut Fourier. Freely accessible online book available at \url{https://www-fourier.ujf-grenoble.fr/~demailly/manuscripts/agbook.pdf}},
  year={2012}
}

@article{Demailly_regula_11current,
 author               = {Demailly, Jean-Pierre},
 fjournal             = {Journal of Algebraic Geometry},
 journal              = {J. Algebraic Geom.},
 number               = {3},
 pages                = {361--409},
 title                = {Regularization of closed positive currents and intersection theory},
 volume               = {1},
 year                 = {1992},
 }

@article{Dinh_Sibony_density,
 author               = {Dinh, Tien-Cuong and Sibony, Nessim},
 fjournal             = {Journal of Algebraic Geometry},
 journal              = {J. Algebraic Geom.},
 number               = {3},
 pages                = {497--551},
 title                = {Density of positive closed currents, a theory of non-generic intersections},
 volume               = {27},
 year                 = {2018},
 }

@article{DinhSibony_pullback,
 author               = {Dinh, Tien-Cuong and Sibony, Nessim},
 fjournal             = {Manuscripta Mathematica},
 journal              = {Manuscripta Math.},
 number               = {3},
 pages                = {357--371},
 title                = {Pull-back of currents by holomorphic maps},
 volume               = {123},
 year                 = {2007},
 }

@article{GZ-weighted,
 author               = {Guedj, Vincent and Zeriahi, Ahmed},
 fjournal             = {Journal of Functional Analysis},
 journal              = {J. Funct. Anal.},
 number               = {2},
 pages                = {442--482},
 title                = {The weighted {M}onge-{A}mp\`ere energy of quasiplurisubharmonic functions},
 volume               = {250},
 year                 = {2007},
 }

@article{Lu-Darvas-DiNezza-mono,
 author               = {Darvas, Tam\'{a}s and Di Nezza, Eleonora and Lu, Chinh H.},
 fjournal             = {Analysis \& PDE},
 journal              = {Anal. PDE},
 number               = {8},
 pages                = {2049--2087},
 title                = {Monotonicity of nonpluripolar products and complex {M}onge-{A}mp\`ere equations with prescribed singularity},
 volume               = {11},
 year                 = {2018},
 }

@article{Lu-Darvas-DiNezza-singularitytype,
 author               = {Darvas, Tam\'{a}s and Di Nezza, Eleonora and Lu, Chinh H.},
 fjournal             = {Compositio Mathematica},
 journal              = {Compos. Math.},
 number               = {2},
 pages                = {380--409},
 title                = {On the singularity type of full mass currents in big cohomology classes},
 volume               = {154},
 year                 = {2018},
 }

@article{Meo-auto-inter,
 author               = {Meo, Michel},
 fjournal             = {Annali della Scuola Normale Superiore di Pisa. Classe di Scienze. Serie IV},
 journal              = {Ann. Scuola Norm. Sup. Pisa Cl. Sci. (4)},
 number               = {1},
 pages                = {161--184},
 title                = {In\'{e}galit\'{e}s d'auto-intersection pour les courants positifs ferm\'{e}s d\'{e}finis dans les vari\'{e}t\'{e}s projectives},
 volume               = {26},
 year                 = {1998},
 }

@article{WittNystrom-duality,
 author               = {Witt Nystr\"{o}m, David},
 fjournal             = {Journal of the American Mathematical Society},
 journal              = {J. Amer. Math. Soc.},
 note                 = {With an appendix by S\'{e}bastien Boucksom},
 number               = {3},
 pages                = {675--689},
 title                = {Duality between the pseudoeffective and the movable cone on a projective manifold},
 volume               = {32},
 year                 = {2019},
 }

@article{WittNystrom-mono,
 author               = {Witt Nystr\"{o}m, David},
 fjournal             = {Indiana University Mathematics Journal},
 journal              = {Indiana Univ. Math. J.},
 number               = {2},
 pages                = {579--591},
 title                = {Monotonicity of non-pluripolar {M}onge-{A}mp\`ere masses},
 volume               = {68},
 year                 = {2019},
 }

@article{XiaojunWu-nefnessHigherDim,
    author = {Wu, Xiaojun},
    title = {A study of nefness in higher codimension},
fjournal = {Bulletin de la Société Mathématique de France},
    journal = {Bull. Soc. Math. France},
    year = {2022},
number = {1},
volume = {150},
pages = {209--249},
}

@article{Cutkosky-ZariskiDecomp,
    author = {Cutkosky, Steven Dale},
    title = {Zariski decomposition of divisors on algebraic varieties},
    journal = {Duke Math. J.},
fjournal = {Duke Mathematical Journal},
    year = {1986},
volume = {53},
pages = {149--156},
}
\bibliographystyle{alpha}

\bigskip

\noindent
\Addresses
\end{document}